\begin{document}
\newcommand{\per}{{\rm per}}
\newcommand{\Per}{{\rm Per}}
\newtheorem{teorema}{Theorem}
\newtheorem{lemma}{Lemma}
\newtheorem{utv}{Proposition}
\newtheorem{svoistvo}{Property}
\newtheorem{sled}{Corollary}
\newtheorem{con}{Conjecture}
\newtheorem{zam}{Remark}

\author{A. A. Taranenko}
\title{Transversals, plexes, and multiplexes in iterated quasigroups \thanks{The author was supported by the Moebius Contest Foundation for Young Scientists.}
}
\date{}

\maketitle

\begin{abstract}
A $d$-ary quasigroup of order $n$ is a $d$-ary operation over a set of cardinality $n$ such that the Cayley table of the operation is a $d$-dimensional latin hypercube of the same order. Given a binary quasigroup $G$, the $d$-iterated quasigroup $G^{\left[d\right]}$ is a $d$-ary quasigroup that is a $d$-time composition of $G$ with itself. A $k$-multiplex (a $k$-plex) $K$ in a $d$-dimensional latin hypercube $Q$ of order $n$ or in the corresponding $d$-ary quasigroup is a multiset (a set) of $kn$ entries such that each hyperplane and each symbol of $Q$ is covered by exactly $k$ elements of $K$. A transversal is a 1-plex. 

In this paper we prove that there exists a constant $c(G,k)$ such that if a $d$-iterated quasigroup $G$ of order $n$ has a $k$-multiplex then for large $d$ the number of its $k$-multiplexes is asymptotically equal to $c(G,k) \left(\frac{(kn)!}{k!^n}\right)^{d-1}$. As a corollary we obtain that if the number of transversals in the Cayley table of a $d$-iterated quasigroup $G$ of order $n$ is nonzero then asymptotically it is $c(G,1)  n!^{d-1}$.  

In addition, we  provide limit constants and recurrence formulas for the numbers of transversals in two iterated quasigroups of order $5$, characterize a typical $k$-multiplex and estimate numbers of partial $k$-multiplexes and transversals in $d$-iterated quasigroups.
\end{abstract}

\section{Definitions and preliminaries}

Let $n,d \in \mathbb N$,  $I_n^d= \left\{ (\alpha_1, \ldots , \alpha_d):\alpha_i \in \left\{1,\ldots,n \right\}\right\}$ , and let $I_n^1 = \left\{1,\ldots,n \right\}$. Denote by $I_{\left\{n,k\right\}}$ the multiset of size $kn$ over the set $\left\{1, \ldots, n\right\}$ in which each of $n$ symbols appears exactly $k$ times.

A \textit{$d$-dimensional matrix $A$ of order $n$} is an array $(a_\alpha)_{\alpha \in I^d_n}$, $a_\alpha \in\mathbb R$. The \textit{support} of a matrix $A$ is the set of indices with nonzero values.

Let $k\in \left\{0,\ldots,d\right\}$. A \textit{$k$-dimensional plane} in $A$ is the submatrix of $A$ obtained by fixing $d-k$ indices and letting the other $k$ indices vary from 1 to $n$. A 1-dimensional plane is said to be a \textit{line}, and a $(d-1)$-dimensional plane is a \textit{hyperplane}.

A \textit{$d$-dimensional latin hypercube $Q$ of order $n$} is a $d$-dimensional matrix of order $n$ filled by $n$ symbols so that all symbols within each line are distinct. 2-Dimensional latin hypercubes are known as \textit{latin squares}.

A \textit{$d$-dimensional MDS code $M$ of order $n$} is a $d$-dimensional (0,1)-matrix of order $n$ such that each line contains exactly one unity entry.

A \textit{$d$-ary quasigroup $f$ of order $n$} is a function $f: I_n^d \rightarrow I_n^1$ such that the equation $x_0 = f(x_1, \ldots, x_n)$ has a unique solution for any one variable if all the other $n$ variables are specified arbitrarily.  A \textit{binary} quasigroup of order $n$ is a binary operation $*$ over a set $I_n^1$ with the following property: for each $a_0, a_1, a_2 \in I_n^1$ there exist unique $x_1, x_2 \in I_n^1$ such that both $a_0 = a_1 * x_2$ and $a_0 = x_1 * a_2$ hold. 

There are natural correspondences between MDS codes, latin hypercubes, and quasigroups of the same order. The Cayley table of a $d$-ary quasigroup $f$ of order $n$ is a $d$-dimensional latin hypercube $Q(f)$ of order $n$, and vice versa, every $d$-dimensional latin hypercube can be considered as the Cayley table of some $d$-ary quasigroup.  The graph $\left\{(x_0, x_1, \ldots, x_d) |~ x_0 =f(x_1, \ldots, x_d) \right\}$ of a quasigroup $f$ is the set of unity entries of the $(d+1)$-dimensional MDS code $M(f)$ of order $n$. The correspondence between a $d$-dimensional latin hypercube $Q$ and the $(d+1)$-dimensional MDS code $M(Q)$ is given by the following rule: an entry $q_{\alpha_1, \ldots, \alpha_{d}}$ of a latin hypercube $Q$ equals $\alpha_{d+1}$ if and only if an entry $m_{\alpha_1, \ldots, \alpha_{d+1}}$ of the MDS code $M(Q)$ equals 1.
The Cayley table of every binary quasigroup $G$ is a latin square $Q(G)$ that corresponds to some 3-dimensional MDS code $M(G)$. 

Although all these three approaches (via quasigroups, latin hypercubes, and MDS codes) are equivalent, in different environments it is more  customary to use only one of them. During this paper we will change between these three concepts repeatedly because  algebraic properties of quasigroups are needed for the proofs, the whole research is motivated by problems for latin squares and hypercubes, but approach via MDS codes is often more symmetric and convenient.

Given  a binary quasigroup $G$ of order $n$ defined by a binary operation $*$,  the \textit{$d$-iterated quasigroup $G$} denoted by  $G^{\left[d\right]}$ is the $d$-ary quasigroup of order $n$ such that
$$x_0 = G^{\left[d\right]} (x_1, \ldots, x_d) \Leftrightarrow (\ldots((x_0 * x_1) * x_2)* \ldots * x_{d-1})*x_d = 1.$$
The latin hypercube that is the Cayley table of the $d$-iterated quasigroup $G$ we denote by $Q(G^{\left[d\right]})$ and the corresponding $(d+1)$-dimensional MDS code denote by $M(G^{\left[d\right]})$.

Let $A$ be a $d$-dimensional matrix of order $n$. A  multiset $K$ of $kn$ indices $\left\{\alpha^1, \ldots, \alpha^{kn}\right\}$  is called a \textit{$k$-multiplex}  if each hyperplane of $A$ contains exactly $k$ elements of $K$. A $k$-multiplex $K$ is called a \textit{$k$-plex} if all elements of $K$ are different (namely, $K$ is a set). 

A 1-plex (or a 1-multiplex) is a set of $n$ indices such that there is exactly one index in each hyperplane. Such sets of indices are known as \textit{diagonals} of multidimensional matrices. Note that union of every $k$ diagonals is a $k$-multiplex, union of $k$ mutually disjoint diagonals is a $k$-plex, but not every $k$-multiplex or $k$-plex can be partitioned  into diagonals.

We will say that a $k$-multiplex $K$ is \textit{indivisible} if there are no $k_1$-multiplexes $K_1$ and $k_2$-multiplexes $K_2$ such that $K$ is the union of $K_1$ and $K_2$; otherwise $K$ is called \textit{divisible}.  A $k$-multiplex $K$ in a $d$-dimensional matrix of order $n$ is \textit{disconnected} if $K$ is the union of $k$-multiplexes $K_1$ and $K_2$ in matrices of smaller orders $n_1$ and $n_2$ ($n = n_1+n_2$);  otherwise $K$ is said to be \textit{connected}.

For a $d$-dimensional matrix $A$ of order $n$, define the \textit{$k$-permanent} of a matrix $A$ to be
$$\per_k A = \sum\limits_{k-plexes~K} \prod\limits_{\alpha \in K} a_{\alpha},$$
and the \textit{$k$-multipermanent} of $A$ to be
$$\Per_k A = \sum\limits_{k-multiplexes~K} \prod\limits_{\alpha \in K} a_{\alpha}.$$
Using term ``permanent'' for these objects is explained by the fact that the 1-permanent (or the 1-multipermanet) is exactly the permanent of multidimensional matrices that was studied in detail in~\cite{myobz}.

It is easy to see that if $A$ is a $(0,1)$-matrix then the $k$-permanent of $A$ is exactly the number of $k$-plexes in the support of $A$ and the $k$-multipermanent of $A$ is the number of $k$-multiplexes over the support of $A$. Also, for every $d$-dimensional (0,1)-matrix $A$ of order $n$ we have $$\per_k A \leq \Per_k A \leq \left( \frac{(kn)!}{k!^n} \right)^{d},$$ since every $k$-plex is a $k$-multiplex and since in each of $d$ positions of indices from a multiplex  we have a permutation of the multiset $I_{\left\{n,k\right\}}$.

A diagonal in a $d$-dimensional latin hypercube $Q$ whose entries contain all different symbols is said to be a \textit{transversal}. A \textit{$k$-plex} in a $d$-dimensional latin hypercube $Q$ of order $n$ is a selection of $kn$ different indices of $Q$ in which each hyperplane and symbol is represented precisely $k$ times. Analogically, we can define $k$-multiplexes in latin hypercubes. From definitions it follows that every transversal in a latin hypercube $Q$ is a diagonal in the MDS code $M(Q)$, every $k$-plex ($k$-multiplex) in $Q$ corresponds to a $k$-plex (a $k$-multiplex) in $M(Q)$, and reverse. For $d$-ary quasigroups we define transversals, $k$-plexes, and $k$-multiplexes so that they coincide with those in latin hypercubes.

For a $k$-multiplex $ K = \left\{\alpha^1, \ldots, \alpha^{kn}\right\}$ we denote by $K_j$ the $kn$-vector $(\alpha_j^1, \ldots, \alpha_j^{kn})$ that is the $j$th component of $K$-multiplex $K$. Given a binary quasigroup $*$ and two $m$-vectors $U$ and $V$, a component-wise product of $U$ and $V$ is
$$U * V = W \Leftrightarrow u_i * v_i = w_i \mbox{ for all } i = 1, \ldots, m.$$
We define $E$ to be a vector with unity components.

\section{Motivation and the main result}

Transversals and plexes in latin squares have been widely studying for the last decades but many important questions on existence are not solved yet.
One of the most known conjectures on transversals belongs to Ryser~\cite{ryser}.

\begin{con}[Ryser] \label{hypryser}
Every latin square of odd order has a transversal.
\end{con}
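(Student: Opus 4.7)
The statement to prove is Ryser's conjecture, a notoriously open problem in combinatorial design theory. The authors of this paper present it as motivation and do not attempt a direct proof, so any ``plan'' must be read as sketching plausible attack routes rather than a route I expect to drive to completion.

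The first approach I would try is the polynomial/sign method of Alon--Tarsi. Each diagonal of an $n\times n$ latin square $L$ picks out a row-to-column bijection $\sigma$ and a row-to-symbol bijection $\tau$, and the diagonal is a transversal precisely when $\tau$ is a permutation. The idea is to set up a signed count such as $\sum_{\sigma}\mathrm{sgn}(\sigma)\,\mathrm{sgn}(\tau)$ over all diagonals and show it is nonzero for odd $n$; any nonvanishing signed sum forces a transversal to exist by cancellation. For Cayley tables of abelian groups of odd order this recovers the classical existence theorem. The task would then be to upgrade the sign-based invariant to an arbitrary latin square of odd order, finding a polynomial identity, likely via the Combinatorial Nullstellensatz, in which the relevant coefficient is provably nonzero exactly when $n$ is odd.

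The second approach is by augmenting long partial transversals. It is known, through work of Brouwer--de Vries--Wieringa, Shor, Hatami--Shor, and most recently Keevash--Pokrovskiy--Sudakov--Yepremyan, that every latin square of order $n$ admits a partial transversal covering at least $n - O(\log^{2} n)$ rows. The plan would be to take such a near-transversal and perform a carefully controlled sequence of row/column/symbol switches, using the odd-order hypothesis to rule out the obstructing parity cycles (the essence of the Cayley table of $\mathbb{Z}_{2m}$ obstruction in even order). Montgomery's recent resolution of the conjecture for all sufficiently large $n$ follows this philosophy, combining switching with absorbers and pseudorandomness of ``typical'' latin squares.

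The main obstacle in either route is pinpointing where odd order decisively enters. Cayley tables of $\mathbb{Z}_{2m}$ show that the hypothesis is necessary, so any valid argument must use parity in a non-cosmetic way: an algebraic proof needs a sign invariant that systematically vanishes in even dimension and persists in odd dimension for every latin square, not merely for the structured ones; a combinatorial proof needs a switching gadget whose feasibility is governed by $n\bmod 2$ uniformly across all latin squares. This is precisely the gap that has kept the conjecture open for more than half a century, and the present paper works around it by studying, instead, the asymptotic enumeration of transversals in $d$-iterated quasigroups once their existence for $d=2$ is assumed as input.
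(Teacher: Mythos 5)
The statement you were asked about is presented in the paper only as Conjecture~\ref{hypryser}; the paper contains no proof of it and uses it purely as motivation, so there is no proof of the author's to compare yours against. You recognize this, and your text is candid that it is a survey of possible attacks rather than an argument. Judged as a proof, however, it has a total gap: neither route is carried out, and both are known to stall exactly at the point you yourself flag, namely where the hypothesis of odd order must enter in a way that works uniformly over all latin squares.

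Concretely: in your first route the quantity $\mathrm{sgn}(\tau)$ is not even defined for a general diagonal, since $\tau(i)=L(i,\sigma(i))$ is a bijection only when the diagonal is already a transversal; the Alon--Tarsi-type signed counts that are well defined (sums over pairs of permutations, or over latin squares of a fixed order) are themselves the subject of open conjectures and are tied to \emph{even} order, and no sign invariant is known to be provably nonzero for every latin square of odd order. In your second route, the long-partial-transversal results you cite do not close the final step from a near-transversal to a full transversal in odd order --- the ``switching gadget governed by $n \bmod 2$'' is precisely the missing ingredient --- and the recent resolution you invoke concerns the Brualdi--Stein variant (a partial transversal of size $n-1$ for large $n$), not Ryser's odd-order statement. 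So your plan correctly locates the obstruction but does not overcome it. Within this paper the rigorous substitute is Corollary~\ref{translow}, which trades the intractable parity in the order $n$ for a tractable parity in the dimension $d$: every $d$-iterated quasigroup has a transversal for all odd $d$, with the asymptotic count $c(G,1)\,n!^{d-1}$ following from the Markov-chain argument of Proposition~\ref{markov}.
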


According to~\cite{handbook}, the following conjecture for 2-plexes in latin squares was proposed by Rodney.

\begin{con}[Rodney] \label{hyprod}
Every latin square has a $2$-plex.
\end{con}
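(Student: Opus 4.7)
Rodney's conjecture is a forty-year-old open problem, so any proof proposal is necessarily speculative. I sketch the most natural line of attack consistent with the framework of the excerpt, aimed at showing that every latin square $L$ of order $n$ admits a $2$-plex, equivalently that $\per_2 M(L) \geq 1$ for the associated $3$-dimensional $(0,1)$-MDS code $M(L)$.

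The plan is to proceed in two steps. First, I would note the much weaker bound $\Per_2 M(L) \geq 1$: two copies of any diagonal of $M(L)$ (for instance, the indices recording the unity entries as the third coordinate varies) form a valid $2$-multiplex, so the $2$-multipermanent is automatically positive. Second, I would try to promote such a $2$-multiplex $K$ to a genuine $2$-plex by a local-swap argument: while some cell $\alpha$ appears in $K$ with multiplicity $2$, remove one copy of $\alpha$ and follow an alternating trail along the three bipartite $n$-regular graphs that arise from pairs of coordinates in $M(L)$ until it closes up, reinserting unity cells along the trail in a way that restores the hyperplane conditions while strictly decreasing the number of doubled cells.

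A complementary approach, to be pursued in parallel, is to split by the parity of $n$. For odd $n$ the body of partial results around Ryser's conjecture often yields transversals $T_1, T_2$, and if $T_1 \cap T_2$ is small then a short local modification of $T_1 \cup T_2$ should produce a $2$-plex. For even $n$, where Ryser-type methods collapse, the Cayley table of $\mathbb{Z}_{2m}$ has explicit $2$-plexes built from two shifted diagonals, and one could try to transport this construction to an arbitrary latin square via a chain of intercalate swaps that preserves the existence of a $2$-plex at each step.

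The main obstacle, and the reason this conjecture has resisted resolution, lies in the swap step itself: the alternating trail can revisit $\alpha$ before its multiplicity drops, producing \emph{rigid} $2$-multiplexes for which no legal local rearrangement reduces the number of doubled cells. I would expect the true crux to be the absence of a permanent-type lower bound for $\per_2$ playing the role that the Egorychev--Falikman inequality plays for ordinary permanents of doubly stochastic matrices; without such a quantitative tool one is forced into case analysis that has historically failed to cover latin squares with no group structure, and I do not see a way around this in the present framework.
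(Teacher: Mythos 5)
This statement is Rodney's conjecture, which the paper records as an \emph{open problem}: it offers no proof, and the only proved case it cites is Theorem~\ref{group2plex} (Vaughan-Lee and Wanless) for Cayley tables of groups. Your proposal is candidly speculative, and indeed it is not a proof; but beyond the admitted incompleteness there is a concrete error in the part you present as the ``easy'' first step. You claim $\Per_2 M(L)\geq 1$ because two copies of a diagonal of $M(L)$ form a $2$-multiplex. A diagonal lying in the support of the $3$-dimensional MDS code $M(L)$ is, by the correspondence set up in Section~1, exactly a transversal of $L$, and transversals need not exist: $Q(\mathbb{Z}_4)$ (and more generally $\mathbb{Z}_n$ for even $n$) has none, yet these squares do have $2$-plexes by Theorem~\ref{group2plex}. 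So the doubling construction does not even get a $2$-multiplex off the ground in general, and the set you describe parenthetically (``the unity entries as the third coordinate varies'') covers each symbol-hyperplane with multiplicity equal to the number of occurrences of that symbol on a chosen permutation diagonal, which is not constant unless that diagonal is already a transversal.

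The second step, promoting a $2$-multiplex to a $2$-plex by alternating trails, is precisely where the difficulty of the conjecture lives, as you yourself note: the trail can return to the doubled cell and one has no monovariant guaranteeing termination. For comparison, the present paper sidesteps the existence question entirely and instead proves an \emph{asymptotic} statement for iterated quasigroups of growing dimension (Theorems~\ref{quasilowmulti} and~\ref{typicalmulti}): there, positivity of the $k$-multipermanent for even $d$ comes for free from the algebraic identity $(E*U)*V=E$ in Lemma~\ref{plexsvoi}, and the passage from multiplexes to plexes is achieved not by local surgery but by a counting argument showing true $k$-multiplexes are exponentially rare. Neither of those tools applies to a single arbitrary latin square, so the conjecture remains open and your sketch does not close it.
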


For the Cayley tables of groups this conjecture was proved by Vaughan-Lee and Wanless.

\begin{teorema}[\cite{2plexgroups}] \label{group2plex}
If $G$ is a group then the latin square $Q(G)$ has a $2$-plex.
\end{teorema}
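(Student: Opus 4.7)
The plan is to split according to whether the Hall--Paige condition holds for $G$, that is, whether the Cayley table $Q(G)$ admits a transversal. Note that a $2$-plex in $Q(G)$ is the same as an ordered pair of bijections $\phi_1,\phi_2:G\to G$ with $\phi_1(g)\neq\phi_2(g)$ for every $g$, such that the multiset $\{g\phi_1(g):g\in G\}\cup\{g\phi_2(g):g\in G\}$ covers each element of $G$ exactly twice.

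Case 1: $G$ admits a complete mapping, that is, there is a bijection $\phi:G\to G$ such that $g\mapsto g\phi(g)$ is also a bijection. Pick any $a\in G\setminus\{e\}$ and set $\phi_a(g)=\phi(g)\cdot a$. Then $\phi_a$ is a bijection (it is $\phi$ followed by right-multiplication by $a$), it is pointwise distinct from $\phi$, and $g\phi_a(g)=(g\phi(g))\cdot a$ is the right-translate by $a$ of a bijection, hence itself a bijection. Consequently $\phi$ and $\phi_a$ yield two disjoint transversals of $Q(G)$, and their union is a $2$-plex.

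Case 2: $Q(G)$ has no transversal. By the Hall--Paige theorem this forces $G$ to have even order with non-trivial cyclic Sylow $2$-subgroup. Here I would try the explicit construction given by two shifted ``main diagonals'',
$$D_e=\{(g,g):g\in G\},\qquad D_a=\{(g,ga):g\in G\},$$
for a carefully chosen $a\in G\setminus\{e\}$. Rows and columns are automatically covered twice, and the two sets of cells are disjoint since $a\neq e$. The symbol condition reduces to
$$s(h)+s(ha^{-1})=2\quad\text{for every }h\in G,\qquad s(h):=|\{g\in G:g^2=h\}|.$$
For the model case $G=\mathbb{Z}_{2k}$ this works with $a=1$: squares hit even elements twice and odd elements zero times, and shifting by $1$ swaps the two, so the sum is identically $2$.

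The main obstacle is Case 2: lifting the cyclic model to an arbitrary group with cyclic Sylow $2$-subgroup. By Burnside's normal $2$-complement theorem, such a group decomposes as $G=N\rtimes P$ with $N$ of odd order and $P$ cyclic, so every element factors uniquely as $g=np$ with $n\in N$, $p\in P$. I would try to choose $a\in P$ outside the index-$2$ subgroup $P^{2}$, and analyse $s$ separately on the cosets of $N\cdot P^{2}$, i.e.\ on the ``even'' and ``odd'' parts of $G$ with respect to the projection $G\to P/P^{2}\cong\mathbb{Z}_{2}$. Squares of $G$ land inside $N\cdot P^{2}$ because $(np)^{2}\in N\cdot P^{2}$ using $N\triangleleft G$, so $s$ is supported on one coset and $s(\cdot a^{-1})$ on the other; the remaining task is to show that on that coset $s$ is identically~$2$, which should follow from a fibration of squaring through the odd-order quotient $G/P\cong N$ (where every element has a unique square root). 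If this direct construction breaks in some edge case, the fallback is induction on $|G|$, lifting a $2$-plex of $Q(N)$ (which exists by Case~1 since $|N|$ is odd) through the semidirect product.
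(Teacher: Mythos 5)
The paper does not actually prove this statement --- it is imported verbatim from Vaughan-Lee and Wanless~\cite{2plexgroups} --- so there is no internal proof to compare against; I can only assess your argument on its own. Your Case~1 is correct (and note it only needs the existence of a complete mapping, not any direction of Hall--Paige). The genuine gap is in Case~2. The union of the two translated diagonals $D_e$ and $D_a$ covers the symbol $h$ with multiplicity $s(h)+s(ha^{-1})$, where $s(h)$ is the number of square roots of $h$ in $G$, so the construction can only succeed if $s(h)\le 2$ for every $h$. This already fails for $G=S_3$, which lies squarely in your Case~2 (its Sylow $2$-subgroup is $\mathbb{Z}_2$, so $Q(S_3)$ has no transversal): the identity has four square roots, namely $e$ and the three transpositions, so $s(e)+s(a^{-1})\ge 4$ for every choice of $a$, and no pair of translated diagonals $D_b$, $D_{ba}$ is a $2$-plex. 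Consequently your hoped-for claim that $s$ is identically $2$ on the coset $N\cdot P^{2}$ is false in general; the squaring map does not fibre evenly over the odd part (note also that $G/P$ need not be a group, since the Sylow $2$-subgroup $P$ need not be normal --- in $S_3$ it is not).

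The fallback you mention --- lifting a $2$-plex of $Q(N)$ through the extension --- is precisely where all the difficulty sits: there is no off-the-shelf lemma transporting $k$-plexes through group extensions, and making such a lifting work is essentially the content of the actual Vaughan-Lee--Wanless proof rather than a routine patch. Two further cautions: invoking the full (hard direction of the) Hall--Paige theorem to enter Case~2 is legitimate today but is a very heavy dependency for a statement proved in 2003, before Hall--Paige was settled; and in any repaired argument you must still verify the cell-disjointness and row/column counts, which your translated-diagonal framework does handle but an ad hoc replacement might not.
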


For larger $k$, $k$-plexes in latin squares are weakly investigated. 
One of the possible ways to generalize plexes was considered by Pula in~\cite{pula}, and a comprehensive survey of other results on transversals and $k$-plexes in latin squares is given in~\cite{wanless}. 

Investigation of transversals in latin hypercubes started in only last years. In~\cite{wanless} Wanless generalized the Ryser's conjecture for latin hypercubes.
\begin{con}[Wanless]
Every latin hypercube of odd dimension or odd order has a transversal.
\end{con}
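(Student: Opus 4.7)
Since this statement is the well-known open conjecture generalizing Ryser's, I would not expect a short proof; I will describe two natural lines of attack and then the main obstacle. The first line is induction on dimension $d$ for the odd-dimension case. A $d$-dimensional latin hypercube $Q$ slices, upon fixing the last coordinate to $j$, into $(d-1)$-dimensional latin hypercubes $Q_1, \ldots, Q_n$. A transversal of $Q$ is then a choice of one entry from each $Q_j$ so that the resulting $n$ entries are pairwise distinct in every coordinate direction and carry distinct symbols. I would try to build such a choice greedily, invoking at each step either an inductive hypothesis on $(d-1)$-dimensional latin hypercubes or a Hall-type matching theorem on a suitable auxiliary hypergraph whose vertices record partial choices and whose edges encode compatibility.

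The second line is a parity argument for the odd-order case. Attach a sign to each diagonal of the associated MDS code $M(Q)$ by regarding it as a tuple in $S_n^{d}$ and taking the product of signs of its components, and then compute the signed number of transversals modulo a small prime. For odd $n$, one hopes to show this signed count is nonzero by an algebraic identity, the multidimensional analogue of expressing $\per L$ modulo~$2$ in terms of the permanent of the all-ones matrix in classical Ryser-style arguments. If such a non-vanishing invariant can be identified, existence of a transversal is immediate.

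The main obstacle is that the two-dimensional case of the odd-order half of this conjecture is Ryser's conjecture itself, still open after more than fifty years; any argument here that applies uniformly in $d$ must in particular settle Ryser's. The parity line is blocked by the fact that no prime $p$ is known for which the signed number of transversals of an odd-order latin square is guaranteed nonzero modulo~$p$, and the inductive line is blocked by the absence of uniform lower bounds on the number of transversals in latin hypercubes of one smaller dimension. Realistically, the most one can hope to prove within the framework of this paper is an iterated-quasigroup version: the forthcoming asymptotic formula $c(G,1)\,n!^{d-1}$ for the number of transversals would reduce existence for all large $d$ to the nonvanishing of $c(G,1)$, which can be checked on a small $d$ and then automatically propagated.
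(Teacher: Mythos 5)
This statement appears in the paper as an open conjecture attributed to Wanless, not as a theorem; the paper contains no proof of it, so there is no argument to compare yours against. Your assessment is accurate on all counts: the odd-order, dimension-two case is Ryser's conjecture and remains open, neither a parity invariant nor an inductive slicing argument is known to work, and the paper does not attempt either. What the paper actually establishes is precisely the iterated-quasigroup special case you anticipate in your final paragraph: Corollary~\ref{translow} shows that $Q(G^{\left[d\right]})$ has a transversal for all odd $d$, that existence for some even $d'$ propagates to all $d \geq d'$, and that the count is asymptotically $c(G,1)\,n!^{d-1}$. The mechanism is the Markov chain argument of Proposition~\ref{markov} (a doubly stochastic transition matrix on column-vector states, reduced to the irreducible block containing $E$, with period at most $2$, followed by the ergodic theorem), rather than either of the two lines you sketch. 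Your proposal is therefore not a proof of the conjecture, but you do not claim it is, and your identification of the obstacles and of the paper's actual scope is correct.
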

It is known that if $n$ and $d$ are both even then the Cayley table of the $d$-iterated group $\mathbb{Z}_n$ has no transversals~\cite{myobz, wanless}. Moreover, using the same technique it is easy to prove the analogous statement for $k$-multiplexes.

\begin{utv} \label{nooddplexesZ}
Let $n$ and $d$ be even and $k$ be odd. Then the $d$-dimensional latin hypercube $Q(\mathbb{Z}^{\left[d\right]}_n)$ has no $k$-multiplexes. 
\end{utv}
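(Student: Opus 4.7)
The plan is to generalize the standard parity argument that handles the case $k = 1$ of transversals in $Q(\mathbb{Z}_n^{[d]})$. I would first identify the alphabet $\{1, \ldots, n\}$ with $\mathbb{Z}_n$, so that the entry $q_{\alpha_1, \ldots, \alpha_d}$ is the unique $x_0 \in \mathbb{Z}_n$ with
\[
x_0 + \alpha_1 + \cdots + \alpha_d \equiv 1 \pmod n.
\]
Suppose towards a contradiction that $K = \{\alpha^1, \ldots, \alpha^{kn}\}$ is a $k$-multiplex in the latin hypercube, and let $x_0^i := q_{\alpha^i}$ denote the symbol at position $\alpha^i$.

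The key step will be to evaluate the double sum
\[
S := \sum_{i=1}^{kn} \bigl(x_0^i + \alpha_1^i + \cdots + \alpha_d^i\bigr) \pmod n
\]
in two ways. Summing within each $i$ first and using the defining identity of the iterated quasigroup gives $S \equiv kn \cdot 1 \equiv 0 \pmod n$. Summing coordinate by coordinate instead, the hyperplane part of the $k$-multiplex condition forces every vector $K_j$ ($1 \le j \le d$) to be a permutation of the multiset $I_{\{n,k\}}$, and the symbol part forces $(x_0^1, \ldots, x_0^{kn})$ to be another permutation of $I_{\{n,k\}}$. Hence each of the $d + 1$ inner sums contributes $k(1 + 2 + \cdots + n) = kn(n+1)/2$, giving $S = (d+1)\, k\, n(n+1)/2$.

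Equating the two expressions modulo $n$ and writing $n = 2m$ reduces the required identity to $(d+1)\, k\, (n+1) \equiv 0 \pmod 2$, which fails since all three factors are odd under the hypotheses $d$ even, $k$ odd, $n$ even. I do not foresee any serious obstacle: the argument is a direct extension of the $k = 1$ parity argument cited in \cite{myobz, wanless}. The one point that deserves care is invoking both halves of the latin-hypercube $k$-multiplex definition (hyperplane coverage \emph{and} symbol coverage), since it is their combination that forces all $d + 1$ coordinate sums to coincide; if one tried to run the same argument for a $k$-multiplex in the underlying matrix only, the symbol sum $\sum_i x_0^i$ would be unconstrained and no contradiction would arise.
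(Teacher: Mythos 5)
Your proposal is correct and follows essentially the same argument as the paper: both evaluate the sum of all $d+1$ coordinates (position coordinates plus the symbol) over the $kn$ elements of the multiplex in two ways, getting $0 \bmod n$ from the group relation and $(d+1)\,k\,n(n+1)/2 \not\equiv 0 \pmod n$ from the coverage conditions. The only cosmetic difference is that the paper phrases the symbol coordinate as the $0$th index of the associated MDS code rather than writing $x_0^i$ separately.
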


\begin{proof}
Assume that a multiset of indices $K = \left\{\alpha^1, \ldots, \alpha^{kn} \right\}$ is a $k$-multiplex in the latin hypercube $Q(\mathbb{Z}^{\left[d\right]}_n)$. Consider the sum 
$$S = \sum\limits_{i=1}^{kn} \sum\limits_{j=0}^{d} \alpha_j^i.$$

Since for each $\alpha^i \in K$ it holds $\sum\limits_{j=0}^d \alpha^i_j \equiv 0 \mod n$, we have $S \equiv 0 \mod n.$ On the other hand, for each $j \in \left\{0, \ldots, d\right\}$ we have $\sum\limits_{i=1}^{kn} \alpha^i_j = k \frac{n(n+1)}{2}.$ Therefore,
$$S = \sum\limits_{j=0}^d \sum\limits_{i=1}^{kn}  \alpha_j^i = \sum\limits_{j=0}^d k \frac{n(n+1)}{2} = k(d+1) \frac{n(n+1)}{2} \not\equiv 0 \mod n,$$
because $n$ and $d$ are even and $k$ is odd. 
\end{proof}

The numbers of transversals in all latin hypercubes of orders 2 and 3 are found in~\cite{myobz}, and the numbers of transversals in $d$-iterated groups of order 4 are calculated in~\cite{myquasi}. Also, in~\cite{myquasi} it is proved that for all odd $d$ a $d$-iterated quasigroup has transversals, and a lower bound on their number is obtained.     

An asymptotic  behavior of the maximum number of transversals in latin hypercubes of fixed dimension and large order was found in~\cite{glebov,mytrans}. In~\cite{eberhyper} it was proved that if for an abelian group $G$ of order $n$ the latin hypercube $Q(G^{\left[d\right]})$ has a transversal then for large order $n$  and fixed dimension $d$ the latin hypercube  $Q(G^{\left[d\right]})$ has asymptotically maximal number of transversals.

One of the main results of this paper is that the analogous statement holds for the Cayley tables of $d$-iterated quasigroups of large dimension. Namely, it will be a special case of the following theorem.

\begin{teorema} \label{quasilowmulti}
Let $G$ be a binary quasigroup of order $n$ and let $M(G^{\left[d-1\right]})$ be the $d$-dimensional MDS code of the  $(d-1)$-iterated quasigroup $G$.
\begin{enumerate}
\item For all even $d$ the MDS code $M(G^{\left[d-1\right]})$ has a nonzero $k$-multipermanent. If for some odd $d'$ we have that $\Per_k M(G^{\left[d'-1\right]})$ is positive then $\Per_k M(G^{\left[d-1\right]}) $ is greater than zero for all $d \geq d'$.
\item  There exists a constant $c(G,k) >0$ such that  
$$\lim\limits_{d \rightarrow \infty} \frac{\Per_k  M(G^{\left[d-1\right]})}{\left( \frac{(kn)!}{k!^n} \right)^{d-2}} = c(G,k) ,$$
where limit is taken over all $d$ for which $\Per_k  M(G^{\left[d-1\right]})$ is nonzero.
\end{enumerate}
\end{teorema}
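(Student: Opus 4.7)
The plan is to encode $\Per_k M(G^{[d-1]})$ as an ordered count of length-$d$ quasigroup chains and then study it via a Markov chain on vectors of length $kn$. A $k$-multiplex in $M(G^{[d-1]})$ is a multiset of $kn$ tuples $(\alpha_0,\ldots,\alpha_{d-1})$ whose componentwise chain product $((\alpha_0*\alpha_1)*\cdots)*\alpha_{d-1}$ equals $1$, and whose coordinate projections $K_0,\ldots,K_{d-1}$ are each permutations of $I_{\{n,k\}}$; write $\mathcal V$ for the set of such permutations, $\mathcal W$ for the set of all $kn$-vectors over $\{1,\ldots,n\}$, and $\mathcal T_d\subset\mathcal V^d$ for the set of ordered tuples satisfying the chain equation, with $\mu := |\mathcal V|=(kn)!/k!^n$. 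A labeling count gives $|\mathcal T_d|=\sum_K(kn)!/\prod_\alpha m_\alpha!$, summed over $k$-multiplexes $K$ with multiplicities $m_\alpha$. Fixing $K_0,\ldots,K_{d-2}$ and writing $V_j$ for the partial chain product at step $j$, the last vector $K_{d-1}$ is uniquely determined by $V_{d-2}*K_{d-1}=E$ and lies in $\mathcal V$ iff $V_{d-2}\in\mathcal V$, by quasigroup bijectivity. Hence $|\mathcal T_d|=\mu^{d-1}p_d$, where $p_d=\Pr(V_{d-2}\in\mathcal V)$ under the Markov chain $V_0\sim\mathrm{Unif}(\mathcal V)$ and $V_j=V_{j-1}*K_j$ with $K_j$ i.i.d.\ uniform on $\mathcal V$.

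For Part 1(a), I would construct an explicit element of $\mathcal T_d$ for each even $d$ by alternating ``free'' and ``inverse'' vectors: pick $K_0\in\mathcal V$ arbitrarily and define $K_1$ componentwise as the unique vector with $K_0*K_1=E$ (which belongs to $\mathcal V$ because $c\mapsto(c\backslash 1)$ is a bijection of $\{1,\ldots,n\}$), so that $V_1=E$; then pick $K_2\in\mathcal V$ arbitrarily, giving $V_2=E*K_2\in\mathcal V$ by the same bijection principle; then $K_3$ is determined by $V_2*K_3=E$; and continue alternately. Since the parity of $d$ matches the parity of the ``return to $E$'' step, this produces $V_{d-1}=E$. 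For Part 1(b), given a valid tuple of odd length $d'$, I would extend to length $d'+2$ by appending $K_{d'}\in\mathcal V$ arbitrary and $K_{d'+1}$ determined by $V_{d'}*K_{d'+1}=E$ (valid since $V_{d'}\in\mathcal V$). Iterating this extension, together with Part 1(a), gives positivity for all $d\geq d'$.

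For Part 2, the Markov chain on $\mathcal W$ commutes with the $S_{kn}$-action by permutation of positions, and hence descends to a finite-state chain on the space of types (multiplicity vectors summing to $kn$). Since $K\mapsto V*K$ is a bijection of $\mathcal W$ for each fixed $V$, the uniform distribution on $\mathcal W$ is stationary, and the induced stationary distribution on types assigns positive weight to $(k,\ldots,k)$. Restricting attention to the arithmetic progression of $d$-values for which $p_d>0$, the reduced chain is irreducible and aperiodic on the reachable residue class, yielding $p_d\to p>0$ along this subsequence. To convert the ordered count to the multipermanent, I would observe that the support of $M(G^{[d-1]})$ has size $n^{d-1}$, which grows exponentially in $d$, so a birthday-style estimate shows that multiplexes with any repeated cell contribute $o(\mu^{d-2})$ to $\sum_K(kn)!/\prod m_\alpha!$. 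Hence $\Per_k M(G^{[d-1]}) \sim |\mathcal T_d|/(kn)! = (p_d/k!^n)\mu^{d-2}$, giving $c(G,k)=p/k!^n>0$.

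The main obstacle is the analysis of the reduced Markov chain on types: pinning down its irreducibility classes and period in terms of $G$ requires combinatorial rather than purely group-theoretic tools, since generic quasigroups lack the nice invariants (such as the mod-$n$ sum exploited in the proof of Proposition~\ref{nooddplexesZ}) that would otherwise control the phase structure. A secondary technical point is controlling the contribution from multiplexes with repeated cells; this rests on the exponential separation $n^{d-1}\gg (kn)^2$ between the size of the support and the square of the multiplex size.
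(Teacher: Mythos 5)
Your reduction is exactly the paper's: the ordered-table count $|\mathcal T_d|$ and the Markov chain $V_j=V_{j-1}*K_j$ are precisely the recurrence $L(m)=A(G)L(m-1)$ on the numbers $l_U(m)$ of $kn\times m$ tables used in Section~4, and your Part~1 constructions coincide with Lemma~\ref{plexsvoi}. The one genuine gap is the step you yourself flag as ``the main obstacle'': you assert that the chain is ``irreducible and aperiodic on the reachable residue class'' and suggest that establishing this requires a $G$-specific combinatorial analysis of the classes. It does not, and without some argument here the central claim $p_d\to p>0$ is unproved. The paper closes this with a soft argument using only facts you have already established: the transition matrix is doubly stochastic, and a reducible doubly stochastic matrix can be conjugated by simultaneous row/column permutations into a block \emph{diagonal} matrix with doubly stochastic blocks (see~\cite[p.~34]{minc}); hence the communicating class of $E$ is closed, and one simply restricts the chain to that class --- no description of the classes in terms of $G$ is needed. (Equivalently, on your quotient chain on types: a stationary distribution with full support forces every state to be recurrent, so every communicating class is closed.) The period of the state $E$ divides $2$ by your own two-step extension in Part~1(b), so the restricted chain has period $1$ or $2$, and the ergodic theorem applied to $B$ or to $B^2$ started at $E$ gives the limit along the appropriate parity class.

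A secondary point: your justification of the negligibility of non-plex contributions by a birthday estimate based on $n^{d-1}\gg (kn)^2$ is not a proof, because the rows of a table in $\mathcal T_d$, conditioned on the chain equation, are not uniformly distributed on the support. The correct (and easy) bound is columnwise, as in Lemma~\ref{uppers}: a table with two prescribed equal rows admits at most $n\,(kn-2)!/\bigl(k!^{n-1}(k-2)!\bigr)$ choices per column, and the ratio of this to $(kn)!/k!^n$ equals $(k-1)/(kn-1)<1$, so tables with a repeated row are exponentially rare compared to $\mu^{d-1}$. With these two repairs your argument matches the paper's proof.
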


\begin{sled} \label{translow}
Let $G$ be a binary quasigroup of order $n$ and let $Q(G^{\left[d\right]})$ be the $d$-dimensional latin hypercube that is the Cayley table of the $d$-iterated quasigroup $G$.
\begin{enumerate}
\item For all odd $d$ the latin hypercube $Q(G^{\left[d\right]})$ has a transversal. If for some even $d'$ we have that $Q(G^{\left[d'\right]})$ has transversals then the latin hypercubes $Q(G^{\left[d\right]}) $ have transversals for all $d \geq d'$.
\item  There exists a constant $c(G,1) >0$ such that if the latin hypercube $Q(G^{\left[d\right]})$ has transversals then for large $d$ the number of transversals is asymptotically equal to $c(G,1)n!^{d-1}$. 
\end{enumerate}
\end{sled}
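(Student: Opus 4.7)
The plan is to derive the corollary as a direct specialization of Theorem~\ref{quasilowmulti} at $k=1$, once the dictionary between transversals of a latin hypercube and $1$-plexes of the associated MDS code is set up. First I would recall from the preliminaries that a diagonal in $Q(G^{[d]})$ whose entries are all distinct symbols (i.e.\ a transversal) corresponds bijectively to a $1$-plex in the $(d+1)$-dimensional MDS code $M(G^{[d]})$, and that $1$-plexes and $1$-multiplexes coincide (a multiset of $n$ indices hitting each hyperplane once must consist of distinct indices). Consequently the number of transversals of $Q(G^{[d]})$ equals $\Per_1 M(G^{[d]})$.

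Next I would match indices with the theorem. Theorem~\ref{quasilowmulti} is phrased for the $d$-dimensional MDS code $M(G^{[d-1]})$; to talk about $Q(G^{[d]})$ I simply replace $d$ by $d+1$ in the theorem, so that $M(G^{[(d+1)-1]})=M(G^{[d]})$ is $(d+1)$-dimensional. Part~1 of the theorem, applied with $k=1$, says $\Per_1 M(G^{[d]})>0$ whenever $d+1$ is even, i.e.\ whenever $d$ is odd; this is exactly the first assertion of the corollary. The monotonicity clause ``for some odd $d'$ \dots for all $d\geq d'$'' becomes ``for some even $d'$ \dots for all $d\geq d'$'' under the same reindexing, yielding the second sentence of part~1.

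For part~2 I would just substitute $k=1$ into the asymptotic formula. With $k=1$ one has $\frac{(kn)!}{k!^n}=n!$ and the exponent $d_{\mathrm{thm}}-2$ becomes $(d+1)-2=d-1$, so
\[
\lim_{d\to\infty}\frac{\Per_1 M(G^{[d]})}{n!^{\,d-1}}=c(G,1),
\]
where the limit ranges over those $d$ for which transversals exist (guaranteed nonempty by part~1). Translating $\Per_1 M(G^{[d]})$ back to the number of transversals of $Q(G^{[d]})$ gives the stated asymptotic $c(G,1)\,n!^{d-1}$, and the positivity $c(G,1)>0$ is inherited directly from the corresponding statement in Theorem~\ref{quasilowmulti}.

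There is essentially no genuine obstacle here beyond bookkeeping: the only subtleties are checking the dimension shift between the $d$-dimensional latin hypercube $Q(G^{[d]})$ and the $(d+1)$-dimensional code $M(G^{[d]})$, and verifying that $k=1$ collapses the multipermanent to the ordinary transversal count via $1\text{-plex}=1\text{-multiplex}$. Once these are in place the corollary is a one-line consequence of the theorem.
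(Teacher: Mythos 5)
Your proposal is correct and is exactly the route the paper intends: the corollary is stated as an immediate specialization of Theorem~\ref{quasilowmulti} at $k=1$, using the correspondence between transversals of $Q(G^{[d]})$ and diagonals ($1$-plexes $=$ $1$-multiplexes) of the $(d+1)$-dimensional code $M(G^{[d]})$, together with the dimension shift $d\mapsto d+1$. The paper gives no separate proof, and your bookkeeping of the parity condition and the exponent $n!^{d-1}$ matches it.
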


\section{Corollaries of the main result}

Before the proof of Theorem~\ref{quasilowmulti} we deduce several other corollaries. 
For this purpose we need the following auxiliary lemmas about plexes, multiplexes, and $k$-multipermanents.

\begin{lemma} \label{plexsvoi}
\begin{enumerate}
\item If $A$ is a $d$-dimensional $(0,1)$-matrix $A$ of order $n$ and if for some $k$ we have $\Per_k A > 0$ then for all integer $m\geq 1$ it holds $\Per_{km} A > 0$.
\item Let $G$ be a binary quasigroup of order $n$. If for some $k$ and $d$ we have $\Per_k M(G^{\left[d\right]}) > 0$  then for all integer $m\geq 0$ it holds $\Per_k M(G^{\left[d+2m\right]}) > 0$. Also, it holds $ \per_k M(G^{\left[d+2m\right]}) \geq \left(\frac{(kn)!}{k!^n}\right)^m \per_k M(G^{\left[d\right]})$. 
\end{enumerate}
\end{lemma}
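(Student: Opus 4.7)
\emph{Plan.} For part 1, the idea is to produce a $km$-multiplex by simple replication. Given a $k$-multiplex $K = \{\alpha^1,\ldots,\alpha^{kn}\}$ in the support of $A$, I form the multiset $mK$ in which every $\alpha^i$ is repeated $m$ times. Each hyperplane of $A$ meets $K$ in exactly $k$ entries and so meets $mK$ in exactly $km$ entries, all still in the support of $A$; this yields a $km$-multiplex and hence $\Per_{km} A \geq 1 > 0$.

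For part 2, I will construct a $k$-multiplex in $M(G^{\left[d+2\right]})$ out of one in $M(G^{\left[d\right]})$ by appending two new coordinates to every tuple. If $\alpha^i = (\alpha_0^i,\ldots,\alpha_d^i) \in K$, then by the defining relation of $G^{\left[d\right]}$ we have $(\cdots(\alpha_0^i*\alpha_1^i)*\cdots)*\alpha_d^i = 1$, so the extended tuple $\tilde\alpha^i = (\alpha_0^i,\ldots,\alpha_d^i,\beta_{d+1}^i,\beta_{d+2}^i)$ lies in $M(G^{\left[d+2\right]})$ precisely when $(1 * \beta_{d+1}^i) * \beta_{d+2}^i = 1$. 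The quasigroup axioms make this equation determine $\beta_{d+2}^i$ uniquely from $\beta_{d+1}^i$ and induce a bijection $\beta_{d+1}\mapsto\beta_{d+2}$ on $I_n^1$. I then pick the vector $(\beta_{d+1}^1,\ldots,\beta_{d+1}^{kn})$ to be an arbitrary permutation of the multiset $I_{\{n,k\}}$; bijectivity forces $(\beta_{d+2}^1,\ldots,\beta_{d+2}^{kn})$ to be a permutation of $I_{\{n,k\}}$ as well. The resulting multiset $\tilde K$ is balanced on the last two coordinates by construction and on the first $d+1$ coordinates by inheritance from $K$, hence is a $k$-multiplex in $M(G^{\left[d+2\right]})$. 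Induction on $m$ then gives $\Per_k M(G^{\left[d+2m\right]}) > 0$.

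For the $\per_k$ inequality I count the extensions produced above when $K$ is a $k$-plex. Distinctness of the $\alpha^i$ lifts to distinctness of the $\tilde\alpha^i$, so each extension is itself a $k$-plex. There are exactly $\frac{(kn)!}{k!^n}$ admissible permutations of $I_{\{n,k\}}$; two different ones must differ on some $\alpha^i$, producing different $\tilde\alpha^i$ and hence different sets $\tilde K$, while projecting $\tilde K$ onto the first $d+1$ coordinates recovers $K$, so plexes arising from different $K$ are also distinct. This yields $\per_k M(G^{\left[d+2\right]}) \geq \frac{(kn)!}{k!^n}\,\per_k M(G^{\left[d\right]})$, and iterating $m$ times gives the claimed bound. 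The only step requiring care is the distinctness of the extensions; this reduces to recovering the original plex from its extension by projection, which is precisely the place where working with plexes rather than multiplexes matters, since repeated tuples in a multiplex would lead to overcounting.
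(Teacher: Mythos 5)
Your proposal is correct and takes essentially the same approach as the paper: part 1 via the $m$-fold union (replication) of a $k$-multiplex, and part 2 by appending an arbitrary permutation $U$ of $I_{\{n,k\}}$ together with the uniquely determined vector $V$ satisfying $(E*U)*V=E$, then counting the $\frac{(kn)!}{k!^n}$ distinct extensions when $K$ is a plex. Your extra remark on why distinctness of the extensions needs the plex (not just multiplex) hypothesis is a point the paper leaves implicit, but the argument is the same.
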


\begin{proof}
1. It follows from the fact that a union of $m$ $k$-multiplexes is a $km$-multiplex.

2. Let $K = \left\{\alpha^1, \ldots, \alpha^{kn}\right\}$ be a $k$-multiplex (a $k$-plex) over the support of $M(G^{\left[d\right]})$. By definition of $k$-multiplex,  we have
$$(\ldots((K_0 * K_1) * K_2)* \ldots * K_{d-1})*K_d = E.$$
Let $U = (u^1, \ldots, u^{kn})$ be a $kn$-vector that is a permutation of the multiset $I_{\left\{ n, k\right\}}$. Define vector $V$ from the relation $((E * U )* V) = E.$

Since $*$ is the operation of the quasigroup $G$, the vector $V$ is a permutation of the multiset $I_{\left\{ n, k\right\}}$. Consequently, for each $i$ the index $ \beta^i = (\alpha_1^i, \ldots, \alpha_d^i, u^i, v^i)$ belongs to the support of the $(d+2)$-iterated group $G^{\left[d+2\right]}$. Therefore, $K' = \left\{\beta^1, \ldots, \beta^{kn}\right\}$ is a $k$-multiplex in the MDS code $M(G^{\left[d+2\right]})$. It only remains to note that if $K$ is a $k$-plex in $M(G^{\left[d\right]})$ then all constructed $k$-plexes  $K'$ in the MDS code $M(G^{\left[d+2\right]})$ are different.
\end{proof}

We will say that a $k$-multiplex $K$ is a \textit{true} $k$-multiplex if it is not a $k$-plex. 

\begin{lemma} \label{uppers}
Let $A$ be a $d$-dimensional matrix of order $n$. Then the following holds.
\begin{enumerate}
\item The number of $k$-multiplexes in the matrix $A$ is not greater than $\left( \frac{(kn)!}{k!^n} \right)^{d}$.
\item The number of true $k$-multiplexes is not greater than $\left( n \frac{(kn-2)!}{k!^{n-1} (k-2)!} \right)^{d}$. 
\item The number of disconnected $k$-multiplexes that are a union of two $k$-multiplexes of orders $n_1$ and $n_2$ $(n_1 + n_2 = n)$ is not greater than $\left({n \choose n_1} \frac{(kn_1)!(kn_2)!}{k!^n}\right)^d.$ 
\item Let $S$ be the minimal size of supports of hyperplanes of the matrix $A$. If $k > S$ then there are no $k$-plexes on the support of $A$. 
\end{enumerate}
\end{lemma}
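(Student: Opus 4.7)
The plan for all four parts rests on a single bookkeeping device: represent a $k$-multiplex $K$ not as an unordered multiset but as an ordered tuple $(\alpha^1,\ldots,\alpha^{kn})$ of its $kn$ indices. Every multiset admits at least one such ordering, so an upper bound on the number of admissible ordered tuples is automatically an upper bound on the number of $k$-multiplexes. Under the ordered representation the $d$ coordinate-projections $K_j=(\alpha_j^1,\ldots,\alpha_j^{kn})$ become essentially independent: each is a permutation of the multiset $I_{\{n,k\}}$, of which there are exactly $\frac{(kn)!}{k!^n}$. Any additional structural property of $K$ translates into a per-coordinate constraint whose count then enters the product to the $d$-th power.

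Part 1 falls out at once: no further constraint, so the bound is $\left(\frac{(kn)!}{k!^n}\right)^d$. For Part 2 I would exploit that a true $k$-multiplex contains a repeated index, so after a reordering we may assume $\alpha^1=\alpha^2$; hence in every coordinate $K_j$ takes the same value at positions $1$ and $2$. Counting such permutations by first choosing the common value (in $n$ ways) and then filling the remaining $kn-2$ positions with $k$ copies of each other symbol and $k-2$ copies of the chosen one gives $n\frac{(kn-2)!}{k!^{n-1}(k-2)!}$ per coordinate, and raising to the $d$-th power yields the stated bound.

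For Part 3 I would first pick the partition of the $n$ symbols into classes of sizes $n_1$ and $n_2$ (in $\binom{n}{n_1}$ ways) and then reorder $K$ so that the $kn_1$ indices of the first sub-multiplex precede the $kn_2$ of the second. Each coordinate projection then splits into a permutation of $I_{\{n_1,k\}}$ on the first block and a permutation of $I_{\{n_2,k\}}$ on the second, contributing $\frac{(kn_1)!(kn_2)!}{k!^n}$ per coordinate; the single factor $\binom{n}{n_1}$ is absorbed trivially into the $d$-fold product to reach the bound in the lemma. Part 4 is immediate from the definition of a $k$-plex: any $k$-plex requires $k$ pairwise distinct indices in every hyperplane, so a hyperplane whose support has fewer than $k$ nonzero entries precludes the existence of any $k$-plex on the support of $A$.

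The main obstacle I anticipate is not mathematical but notational: the passage from unordered multisets to ordered tuples must be flagged carefully so that one overcounts rather than undercounts, and in Part 2 one must be sure to reorder so that the repeated pair occupies a fixed pair of positions before the counting argument is applied. Beyond this bookkeeping I do not expect any genuine difficulty, since every bound arises from the same per-coordinate counting principle.
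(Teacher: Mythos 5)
Your proposals for parts 1, 2, and 4 coincide with the paper's proof: the same per-coordinate counting of multiset permutations, the same normalization $\alpha^1=\alpha^2$ for a true $k$-multiplex, and the same one-line observation for part 4.

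Part 3, however, has a genuine (if easily repaired) gap. You choose the partition of the $n$ symbols into classes of sizes $n_1$ and $n_2$ \emph{once}, globally, contributing a single factor $\binom{n}{n_1}$. But a disconnected $k$-multiplex is a union of $k$-multiplexes living in submatrices of orders $n_1$ and $n_2$, and a submatrix of order $n_1$ is obtained by selecting $n_1$ index values \emph{independently in each of the $d$ coordinate directions}; these selections need not be the same subset of $\{1,\dots,n\}$ in every coordinate. Concretely, for $d=2$, $n=2$, $k=1$, $n_1=n_2=1$, the diagonal $\{(1,2),(2,1)\}$ is disconnected, yet its first sub-multiplex uses symbol $1$ in the first coordinate and symbol $2$ in the second, so no single global partition captures it. Your count of $\binom{n}{n_1}\bigl(\frac{(kn_1)!(kn_2)!}{k!^n}\bigr)^d$ therefore bounds only the disconnected multiplexes whose separating symbol partition is the same in every coordinate, and it is not an upper bound on all of them. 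The paper avoids this by making the choice of the $n_1$-subset part of the per-coordinate count, i.e.\ in each of the $d$ positions one first chooses which $n_1$ of the $n$ symbols serve the first sub-multiplex and then permutes the two multisets over the chosen and the complementary symbols; this yields $\binom{n}{n_1}^d$ rather than $\binom{n}{n_1}$, which is exactly why the stated bound has the binomial coefficient inside the $d$-th power. Once you move the subset choice inside the product, your argument goes through verbatim.
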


\begin{proof}
1. As it was mentioned before, in each of $d$ positions of indices of a $k$-multiplex we have a permutation of the multiset $I_{\left\{n,k\right\}}$.

2.  If $K = \left\{\alpha^1, \ldots, \alpha^{kn}\right\}$ is a $k$-multiplex but is not a $k$-plex then $K$ contains at least two identical elements. Without loss of generality,  suppose that indices $\alpha^1$ and $\alpha^2$ are the same. For constructing a true $k$-multiplex $K$, in each of $d$ positions we have $n$ possibilities to choose a symbol for indices $\alpha^1$ and $\alpha^2$ and for other $kn-2$ indices we take a permutation of the multiset of $kn-2$ elements in which each of $n-1$ symbols appears exactly $k$ times and the last symbol appears $k-2$ times.  

3.  If a $k$-multiplex $K = \left\{\alpha^1, \ldots, \alpha^{kn}\right\}$ is a union of two $k$-multiplexes of orders $n_1$ and $n_2$ then $K$ can be partitioned into two  submultisets $K_1$ and $K_2$ of sizes $k_1n$ and $k_2n$ such that any two indices $\alpha^i \in K_1$ and $\alpha^j \in K_2$ differ at all positions. For constructing such disconnected $k$-multiplex $K$, in each of $d$ positions we choose which $n_1$ of $n$ symbols will be used for elements of $K_1$ and then we take a permutation of the multiset $I_{\left\{n_1, k\right\}}$ over chosen symbols and a permutation of the multiset $I_{\left\{n_2, k\right\}}$ over other symbols.

4. By definition, each hyperplane of $A$ contains exactly $k$ different elements of a $k$-plex, so if the support of $A$ in one of hyperplanes is less than $k$ then $A$ has no $k$-plexes.  
\end{proof}

Using this lemma and Theorem~\ref{quasilowmulti} we prove that for large $d$ a typical $k$-multiplex in a $d$-iterated quasigroup is a connected indivisible $k$-plex. 

\begin{teorema} \label{typicalmulti}
Let $G$ be a binary quasigroup of order $n$ and let $M(G^{\left[d\right]})$ be the $(d+1)$-dimensional MDS code of the $d$-iterated quasigroup $G$.
Then the ratio of the number of connected indivisible $k$-plexes to the number of $k$-multiplexes in  $M(G^{\left[d\right]})$ tends to one as $d$ tends to infinity. In other words, there is a constant $c(G,k) > 0$ such that the  ratio of the number of connected indivisible $k$-plexes to  $\left( \frac{(kn)!}{k!^n} \right)^{d-1}$ tends to $c(G,k)$ as $d \rightarrow \infty$  and where limit is taken over all $d$ for which $\Per_k M(G^{\left[d\right]}) >0$.
\end{teorema}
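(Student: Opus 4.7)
My plan is to prove that the number of $k$-multiplexes of $M(G^{[d]})$ that are \emph{not} connected indivisible $k$-plexes is $o(T^{d-1})$, where $T=(kn)!/k!^n$. Combined with Theorem~\ref{quasilowmulti}, which gives $\Per_k M(G^{[d]}) \sim c(G,k)\,T^{d-1}$, this immediately yields the claimed asymptotic. Since a $k$-multiplex is a connected indivisible $k$-plex exactly when it is simultaneously a $k$-plex, indivisible, and connected, by a union bound it suffices to show that each of the three defect classes---true $k$-multiplexes, divisible $k$-multiplexes, and disconnected $k$-multiplexes---has size $o(T^{d-1})$.

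For true $k$-multiplexes, Lemma~\ref{uppers}(2) applied to the $(d+1)$-dimensional matrix $M(G^{[d]})$ gives an upper bound which, after the simplification $n(kn-2)!/(k!^{n-1}(k-2)!)=\tfrac{k-1}{kn-1}T$, becomes $\left(\tfrac{k-1}{kn-1}\right)^{d+1}T^{d+1}$. Dividing by $T^{d-1}$ leaves $T^2\left(\tfrac{k-1}{kn-1}\right)^{d+1}\to 0$, since $k-1<kn-1$ for $n\geq 2$ (and the class is empty when $k=1$). For divisible $k$-multiplexes, which only exist when $k\geq 2$, each arises from at least one pair $(K_1,K_2)$ with $K_j$ a $k_j$-multiplex and $k_1+k_2=k$, $k_1,k_2\geq 1$, so by Lemma~\ref{uppers}(1) the count is at most $\sum_{k_1=1}^{k-1}T_{k_1}^{d+1}T_{k_2}^{d+1}$, with $T_{k_j}=(k_j n)!/k_j!^n$. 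The key arithmetic inequality $T_{k_1}T_{k_2}/T_k=\binom{k}{k_1}^n/\binom{kn}{k_1 n}<1$ (the balanced $k_1 n$-subsets of $n$ blocks of size $k$ form a proper subset of all $k_1 n$-subsets of $\{1,\ldots,kn\}$ when $1\leq k_1\leq k-1$) ensures each summand divided by $T^{d-1}$ tends to zero.

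For disconnected $k$-multiplexes, each such $K$ splits along at least one ordered partition $\{1,\ldots,n\}=S_1\sqcup S_2$ with $|S_i|=n_i\geq 1$ via $K_i=K\cap S_i^{d+1}$, a $k$-multiplex of the $(d+1)$-dimensional submatrix $M(G^{[d]})|_{S_i^{d+1}}$ of order $n_i$. Summing over the at most $2^n$ such partitions and bounding each factor by Lemma~\ref{uppers}(1), the count is at most $\sum \binom{n}{n_1}T_{k,n_1}^{d+1}T_{k,n_2}^{d+1}$, where $T_{k,n_i}=(kn_i)!/k!^{n_i}$. Using $T_{k,n_1}T_{k,n_2}/T_{k,n}=\binom{kn}{kn_1}^{-1}<1$ for $1\leq n_1\leq n-1$, the ratio to $T^{d-1}$ is $T^2\binom{n}{n_1}\binom{kn}{kn_1}^{-(d+1)}\to 0$.

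The main technical subtlety is in the disconnected case when $k=1$: the direct application of Lemma~\ref{uppers}(3) is too weak, since the ratio of its bound to $T^{d-1}$ becomes a constant of order $(n!)^2$. The remedy, used above, is to bound sub-multiplexes by applying Lemma~\ref{uppers}(1) inside each submatrix, which effectively exploits the strictly stronger inequality $n_1!\,n_2!<n!$ valid for $1\leq n_i\leq n-1$. Beyond this observation, the proof is a routine assembly of the three defect bounds with Theorem~\ref{quasilowmulti}.
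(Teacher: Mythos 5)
Your overall strategy --- bounding the three defect classes (true, divisible, and disconnected $k$-multiplexes) against the lower bound $\Per_k M(G^{\left[d\right]})\geq c\,T^{d-1}$ supplied by Theorem~\ref{quasilowmulti}, writing $T=(kn)!/k!^n$ --- is exactly the paper's, and your treatment of the first two classes reproduces the paper's argument: your simplification $n(kn-2)!/(k!^{n-1}(k-2)!)=\frac{k-1}{kn-1}\,T$ is correct, and the divisible case uses the same inequality $\binom{kn}{k_1n}>\binom{k}{k_1}^n$. The gap is in the disconnected case. By the paper's definition (made explicit in the proof of Lemma~\ref{uppers}(3)), a disconnected $k$-multiplex is a union of $k$-multiplexes of two submatrices of orders $n_1$ and $n_2$, and such a submatrix is cut out by choosing an $n_i$-subset of the index range \emph{independently in each coordinate position}; this is precisely why the factor $\binom{n}{n_1}$ sits inside the $d$-th power in Lemma~\ref{uppers}(3). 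Your claim that $K$ splits as $K\cap S_1^{d+1}$ and $K\cap S_2^{d+1}$ for a single partition $S_1\sqcup S_2$ of $\{1,\dots,n\}$ is false in general, so your bound $\binom{n}{n_1}\bigl(T_{k,n_1}T_{k,n_2}\bigr)^{d+1}$, carrying only one factor $\binom{n}{n_1}$, is not a valid upper bound on the number of disconnected $k$-multiplexes.

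For $k\geq 2$ the detour is also unnecessary: the paper's route, Lemma~\ref{uppers}(3) combined with $\binom{kn}{kn_1}>\binom{n}{n_1}$ (valid exactly when $k\geq 2$), already yields a geometrically decaying ratio. You are right that for $k=1$ that inequality degenerates to an equality and the ratio is only bounded by a constant --- but no repair is possible there, because for $k=1$ \emph{every} diagonal is disconnected: any two of its elements differ in all coordinates, so any single element is a $1$-multiplex of an order-$1$ submatrix and the remaining $n-1$ elements form one of order $n-1$. Hence the number of connected $1$-plexes is zero, and the connectedness clause of the theorem can only be meant for $k\geq 2$ (for $k=1$ every $1$-multiplex is automatically an indivisible $1$-plex, so the rest of the statement survives as the assertion about transversals). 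Your write-up should restrict the disconnectedness estimate to $k\geq 2$ and note this degeneracy, rather than patch the $k=1$ case with the uniform-partition argument.
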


\begin{proof}
We prove that the ratios of the numbers of true $k$-multiplexes, divisible $k$-multiplexes, and disconnected $k$-multiplexes to the number of all $k$-multiplexes in $M(G^{\left[d\right]})$ (if it is nonzero) tends to zero as $d$ tends to infinity that implies the statement of the theorem. Recall that, by Theorem~\ref{quasilowmulti}, there exists a constant $c > 0$ such that if $\Per_k M(G^{\left[d\right]}) >0$ then 
$$\Per_k M(G^{\left[d\right]}) \geq c \left( \frac{(kn)!}{k!^n} \right)^{d}.$$

1. By Lemma~\ref{uppers}, the number of true $k$-multiplexes in $M(G^{\left[d\right]})$ is not greater than $\left( n \frac{(kn-2)!}{k!^{n-1} (k-2)!} \right)^{d}$. Therefore, the ratio of this number to the number of all $k$-multiplexes in $M(G^{\left[d\right]})$ is not greater than $\frac{1}{c} \left(\frac{1}{k^2(k-1)(kn-1)}\right)^d$ that tends to zero as $d \rightarrow \infty$.

2. By Lemma~\ref{uppers}, the number of $k$-multiplexes in $M(G^{\left[d\right]})$ is not greater than $\left( \frac{(kn)!}{k!^n} \right)^{d}$. Consequently, the number of divisible $k$-multiplexes that are the union of a $k_1$-multiplex and a $k_2$-multiplex  is not greater than $\left( \frac{(k_1n)! (k_2n)!}{k_1!^n k_2!^n} \right)^{d}$. Then the number of all divisible $k$-multiplexes is less than
$$\sum\limits_{k_1+k_2 = k} \left( \frac{(k_1n)! (k_2n)!}{k_1!^n k_2!^n} \right)^{d} < k \max\limits_{k_1+k_2 = k} \left( \frac{(k_1n)! (k_2n)!}{k_1!^n k_2!^n} \right)^{d}.$$ 

As is known, if $n \geq 2$ then
$${kn \choose k_1n} >  {k \choose k_1}^n.$$
It holds because if $k = k_1+ k_2$ then the left-hand side of this inequality is a number $kn$-tuples over the set of two symbols in which symbol $i$ appears exactly $k_in$ times and the right-hand side is a number of sets of $n$ $k$-tuples over the same set of symbols in which symbol $i$ appears exactly $k_i$ times.

Using this inequality and comparing the bound on divisible $k$-multiplexes with the number of all $k$-multiplexes we conclude that there exists a constant $0 < \kappa <1$ such that their ratio is not greater than $\frac{k}{c} \kappa^d$ that tends to zero as $d \rightarrow \infty$.

3.  By Lemma~\ref{uppers}, the number of disconnected $k$-multiplexes that are the union of $k$-multiplexes of orders $n_1$ and $n_2$  is not greater than $\left({n \choose n_1} \frac{(kn_1)!(kn_2)!}{k!^n}\right)^d$. Then the number of all disconnected $k$-multiplexes is less than
$$\sum\limits_{n_1+n_2 = k} \left({n \choose n_1} \frac{(kn_1)!(kn_2)!}{k!^n}\right)^d < n \max\limits_{k_1+k_2 = k} \left({n \choose n_1} \frac{(kn_1)!(kn_2)!}{k!^n}\right)^d.$$ 

As is known, if $k \geq 2$ then
$${kn \choose kn_1} > {n \choose n_1}.$$
Using this inequality and comparing the bound on disconnected $k$-multiplexes with the number of all $k$-multiplexes we conclude that there exists a constant $0 < \kappa <1$ such that their ratio is not greater than $\frac{n}{c} \kappa^d$ that tends to zero as $d \rightarrow \infty$.
\end{proof}

Next we obtain the following result for plexes and multiplexes in iterated groups.

\begin{teorema} \label{grouplow}
Let $G$ be a group of order $n$ and let $M(G^{\left[d-1\right]})$ be the $d$-dimensional MDS code of the iterated group $G$.
\begin{enumerate}
\item If $k$ is even and $d \geq 3$ then $M(G^{\left[d-1\right]})$ has a $k$-multiplex. Moreover, there exists a constant $c(G,k) > 0$  such that  
$$\lim\limits_{d \rightarrow \infty} \frac{\Per_k  M(G^{\left[d-1\right]})}{\left( \frac{(kn)!}{k!^n} \right)^{d-2}}  = \lim\limits_{d \rightarrow \infty} \frac{\per_k  M(G^{\left[d-1\right]})}{\left( \frac{(kn)!}{k!^n} \right)^{d-2}} = c(G,k) ,$$
where the latter limit is taken over all $d$ for which $\per_k  M(G^{\left[d-1\right]})$ is nonzero.

\item If $k$ is odd then there exists a constant $c(G,k) > 0$  such that  
$$\lim\limits_{d \rightarrow \infty} \frac{\Per_k  M(G^{\left[d-1\right]})}{\left( \frac{(kn)!}{k!^n} \right)^{d-2}}  = \lim\limits_{d \rightarrow \infty} \frac{\per_k  M(G^{\left[d-1\right]})}{\left( \frac{(kn)!}{k!^n} \right)^{d-2}} = c(G,k) ,$$
where the limits are taken over all $d$ for which $k$-permanents and $k$-multipermanents are nonzero.
\end{enumerate}
\end{teorema}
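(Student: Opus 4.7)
The plan is to derive Theorem~\ref{grouplow} by assembling three earlier results: Theorem~\ref{quasilowmulti} for the $\Per_k$ asymptotics, the estimate carried out inside the proof of Theorem~\ref{typicalmulti} to transfer those asymptotics from $\Per_k$ to $\per_k$, and Theorem~\ref{group2plex} together with Lemma~\ref{plexsvoi} to secure the missing $k$-multiplexes in the odd-dimensional cases when $k$ is even. Essentially no new analysis will be needed; the theorem is an assembly of previously established facts.

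First, since every group is in particular a binary quasigroup, Theorem~\ref{quasilowmulti} applies verbatim to $G$ and yields
$$\frac{\Per_k M(G^{\left[d-1\right]})}{\bigl((kn)!/k!^n\bigr)^{d-2}} \longrightarrow c(G,k)>0$$
as $d\to\infty$ along $d$ with $\Per_k M(G^{\left[d-1\right]}) > 0$. This already settles the $\Per_k$ halves of both parts~1 and~2. To transfer the conclusion to $\per_k$, I will estimate $\Per_k - \per_k$, i.e., the number of true $k$-multiplexes. By part~2 of Lemma~\ref{uppers} this quantity is at most $\bigl(n(kn-2)!/(k!^{n-1}(k-2)!)\bigr)^d$, and the elementary simplification that already appears inside the proof of Theorem~\ref{typicalmulti} rewrites this bound as $\bigl((k-1)/(kn-1)\bigr)^d \cdot \bigl((kn)!/k!^n\bigr)^d$. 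Dividing by the natural scale $((kn)!/k!^n)^{d-2}$ leaves a fixed power of $(kn)!/k!^n$ times $((k-1)/(kn-1))^d$, which tends to zero exponentially since $(k-1)/(kn-1) < 1$. Hence $\per_k$ shares the same limit constant $c(G,k)$ with $\Per_k$, the limit being taken over those $d$ for which $\per_k$ is positive.

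For the existence statement of part~1, the even-$d$ case $d \geq 4$ is already covered by part~1 of Theorem~\ref{quasilowmulti}, so only odd $d \geq 3$ remains. At $d = 3$ I would invoke Theorem~\ref{group2plex} to obtain a 2-plex in $Q(G)$; since for a group $G$ the latin square $Q(G^{\left[2\right]})$ is isotopic to $Q(G)$, and since $k$-plexes are preserved under latin-square isotopy and under the passage to the MDS code, this produces a 2-plex in $M(G^{\left[2\right]})$. Part~1 of Lemma~\ref{plexsvoi} upgrades this to a $k$-plex (in particular a $k$-multiplex) for every even $k$, so $\Per_k M(G^{\left[2\right]}) > 0$, and part~2 of Lemma~\ref{plexsvoi} then propagates this positivity to every odd $d \geq 3$ by adding two dimensions at a time.

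The only mildly delicate step is the bookkeeping verification that a 2-plex in $Q(G)$ really yields a 2-plex in $M(G^{\left[2\right]})$: this is essentially definitional once one checks that $Q(G^{\left[2\right]})$ and $Q(G)$ are isotopic in the group case. All substantive analytic work has already been carried out in Theorems~\ref{quasilowmulti} and~\ref{typicalmulti}, so I do not anticipate any genuine obstacle beyond this small observation.
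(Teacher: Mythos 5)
Your proposal is correct and follows essentially the same route as the paper: Theorem~\ref{quasilowmulti} for the $\Per_k$ asymptotics, the true-multiplex bound (which you inline from Lemma~\ref{uppers} rather than citing Theorem~\ref{typicalmulti}) to transfer the limit to $\per_k$, and Theorem~\ref{group2plex} plus Lemma~\ref{plexsvoi} to cover odd $d$ when $k$ is even. The only tiny slip is calling the output of part~1 of Lemma~\ref{plexsvoi} a ``$k$-plex'' --- it only yields a $k$-multiplex --- but that is all you actually use, so the argument stands.
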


\begin{proof}
1. By Theorem~\ref{group2plex}, we have that $\per_2  M(G^{\left[2\right]}) >0$. Using Lemma~\ref{plexsvoi}, we obtain that $\Per_k  M(G^{\left[2\right]}) >0$ for all even $k$, and consequently $\Per_k  M(G^{\left[d-1\right]}) >0$ for all even $k$ and odd $d$. As a corollary of Theorem~\ref{quasilowmulti}, we have that for all $d \geq 3$ the $k$-multipermanent of $M(G^{\left[d-1\right]})$ is nonzero and there exists a constant $c(G,k) > 0$  such that  
$$\lim\limits_{d \rightarrow \infty} \frac{\Per_k  M(G^{\left[d-1\right]})}{\left( \frac{(kn)!}{k!^n} \right)^{d-2}}  = c(G,k) ,$$
Since by Theorem~\ref{typicalmulti}, for large $d$ a typical $k$-multiplexes in the iterated group $M(G^{\left[d-1\right]})$ is a $k$-plex, we have that the $k$-permanent of $M(G^{\left[d-1\right]})$ has the same limit.

2. This clause is an immediate corollary of Theorem~\ref{quasilowmulti} and Theorem~\ref{typicalmulti}.
\end{proof}

In conclusion, we note that if the Rodney's conjecture is true then the clause 1 of Theorem~\ref{grouplow} holds for all quasigroups $G$, and if the Ryser's conjecture is true then in addition it holds for all odd $k$ and quasigroups of odd orders $n$.

\section{Proof of the Theorem~\ref{quasilowmulti}}

The main idea of the proof of Theorem~\ref{quasilowmulti} is to show that the number of $k$-multiplexes in the MDS code of an iterated quasigroup is a result of some Markov process.

Suppose $K = \left\{ \alpha^1, \ldots, \alpha^{kn} \right\}$ be a $k$-multiplex in the MDS code $M(G^{\left[d-1\right]})$ of order $n$. Consider the rectangular $kn \times d$  table $T = (t_{i,j})$ such that $t_{i,j} = \alpha^i_j$. Note that the table $T$ has the following two properties:
\begin{enumerate}
\item Each column $T_j$ of $T$ is a permutation of the set $I_{\left\{n,k\right\}}$.
\item For columns $T_j$ of $T$ we have
$$(\ldots((T_1 * T_2)*T_3)* \ldots * T_{d-1} )*T_d = E,$$
where $*$ is the operation of the quasigroup $G$.
\end{enumerate}

Conversely, rows of every rectangular $kn \times d$  table $T$ satisfying these two properties compose a $k$-multiplex in the $(d-1)$-iterated quasigroup $G$.  

Permutations of rows of the table $T$ do not change the corresponding $k$-multiplex $K$, and if $K$ is $k$-plex then it corresponds  to exactly $\frac{(kn)!}{k!^n}$ tables $T$. Since, by Lemma~\ref{uppers}, the number of true $k$-multiplexes is not greater then $\left( n \frac{(kn-2)!}{k!^{n-1} (k-2)!} \right)^{d}$, to prove Theorem~\ref{quasilowmulti} it is suffice to show that the number of such tables is asymptotically equal to $c(G,k) \frac{(kn)!}{k!^n}$ for some constant $c(G,k)$.

Given $m \geq 1$ and a $kn$-vector $U = (u_1, \ldots, u_{kn})$, consider the set $\mathcal{T}_U(m)$ of all $kn \times m$ tables $T$ in which each column $T_j$ is a permutation of the set  $I_{\left\{n,k\right\}}$ and such that
$$(\ldots((T_1 * T_2)*T_3)* \ldots * T_{d-1} )*T_m = U.$$

Let $W$ be a column $kn$-vector that is a permutation of the set $I_{\left\{n,k\right\}}$.  The number of such vectors is equal to the multinomial coefficient ${kn \choose k \ldots k}=\frac{(kn)!}{k!^n}$. For two column $kn$-vectors $U$ and $V$  over the set $I_n^1$ define $a_{U,V}$ to be the number of permutations $W$ such that $V * W = U$. By definition, we have that for given $U$ and $V$ it holds 
$$\sum\limits_{V' \in I_n^{kn}} a_{U,V'} = \sum\limits_{U' \in I_n^{kn}} a_{U',V}  = \frac{(kn)!}{k!^n}.$$  

Note that if we extend  a table $T$  from the set $\mathcal{T}_V(m-1)$ to the right by a permutation $W$ then we obtain a table from the set $\mathcal{T}_U(m)$, where $V * W = U$. 
For every $m$ the number $l_U(m)$ of tables from $\mathcal{T}_U(m)$ depends only on numbers of tables of the previous size and can be expressed as 
$$l_U(m) = \sum\limits_{V \in I_n^{kn}} a_{U,V} l_V (m-1),$$
where   coefficients $a_{U,V}$ are independent of $m$.

If we suppose $L(m)$ to be a $n^{kn}$-vector of all $l_U(m)$, then we obtain
$$L(m) = A(G) L(m-1),$$
where $A(G) = (a_{U,V})$ is a  (0,1)-matrix of order $n^{kn}$. As it was noted before, the sum of entries of the matrix $A(G)$ over each column and each row is equal to $\frac{(kn)!}{k!^n}$.

Next we normalize vectors $L(m)$ and the matrix $A(G)$ and put  $X(m)  = \left(\frac{(kn)!}{k!^n}\right)^{-m} L(m)$ and $B(G) = \left( \frac{(kn)!}{k!^n} \right)^{-1} A(G)$. Then we obtain the process
$$X(m) = B(G) X(m-1)$$
that is the Markov chain with a  doubly stochastic transition matrix $B(G)$ (a nonnegative matrix with sums of entries over each row and each column is equal to 1).  For our purposes we are interested in the value of only one component of the vector $X(m)$, namely $x_E(m)$.

The present reasoning implies that Theorem~\ref{quasilowmulti} is a corollary of the following statement.

\begin{utv} \label{markov}
 Let $G$ be a quasigroup of order $n$. Suppose the matrix $B(G)$ and vectors $X(m)$ are as above and
$$X(m) = B(G) X(m-1).$$
\begin{enumerate}
\item For all even $m$ we have $x_E(m)>0$. If for some odd $m'$ we have $x_E(m')>0$ then $x_E(m)>0$ for all $m \geq m'$.
\item  There exists a constant $c = c(G) >0$ such that  
$$\lim\limits_{m \rightarrow \infty} x_E(m) = c,$$
where limit is taken over all $m$ for which $x_E(m)$ is nonzero.
\end{enumerate}
\end{utv}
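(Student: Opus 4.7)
The plan is to treat the recursion $X(m)=B(G)X(m-1)$ as a finite Markov chain with the doubly stochastic transition matrix $B(G)$, and to analyze its long-time behaviour by combining the ergodic theorem with a natural $S_{kn}$-symmetry of $B(G)$.

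For Part~1, observe that $x_E(m)>0$ is equivalent to $\Per_k M(G^{[m-1]})>0$. The base case $x_E(2)>0$ is direct: choose any permutation $T_1$ of $I_{\{n,k\}}$ and define $T_2$ componentwise by $(T_2)_i=(T_1)_i\backslash 1$, the unique element with $(T_1)_i*(T_2)_i=1$; because $a\mapsto a\backslash 1$ is a bijection on the symbol set, $T_2$ is again a permutation of $I_{\{n,k\}}$ and $T_1*T_2=E$. Lemma~\ref{plexsvoi} part~2 then propagates positivity in steps of two, giving $x_E(m)>0$ for all even $m$, and the same lemma handles the odd case from any hypothesised $m'$ upward.

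For Part~2, I would use that $B(G)$ is finite and doubly stochastic, so the uniform distribution on $I_n^{kn}$ is stationary and every state is recurrent (a transient state would have stationary probability zero). The state space decomposes into closed communication classes, and the restriction of $B(G)$ to each closed class is irreducible and doubly stochastic with uniform stationary measure. Let $C^*$ denote the closed class containing $E$, let $p$ be its period, and let $C^*=C^*_0\sqcup\cdots\sqcup C^*_{p-1}$ be the cyclic decomposition with $E\in C^*_0$. The ergodic theorem for irreducible periodic finite chains gives $B(G)^{m-1}_{E,V}\to p/|C^*|$ for $V\in C^*_i$ along $m\equiv 1-i\pmod p$, and $B(G)^{m-1}_{E,V}=0$ in all other cases. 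Writing $\Pi$ for the set of permutations of $I_{\{n,k\}}$, which is the support of $X(1)$, this gives
$$x_E(m) = \frac{1}{|\Pi|}\sum_{V\in\Pi\cap C^*_{(1-m)\bmod p}} B(G)^{m-1}_{E,V} \;\longrightarrow\; \frac{p\,|\Pi\cap C^*_{(1-m)\bmod p}|}{|\Pi|\,|C^*|}$$
along each residue of $m$ modulo $p$, the limit being strictly positive precisely when the corresponding intersection is nonempty.

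The main obstacle is to ensure these per-residue limits agree, so that the one-sided limit over $\{m:x_E(m)>0\}$ is well defined. My resolution uses an $S_{kn}$-symmetry of $B(G)$: because $W\mapsto\sigma W$ is a bijection of $\Pi$ for each $\sigma\in S_{kn}$, the identity $B(G)_{\sigma U,\sigma V}=B(G)_{U,V}$ holds, so $B(G)$ commutes with the $S_{kn}$-action on $I_n^{kn}$ by position permutation. Closed and cyclic classes are therefore unions of $S_{kn}$-orbits. Since $E$ is fixed by $S_{kn}$ the class $C^*$ is $S_{kn}$-invariant, and since $\Pi$ is a single $S_{kn}$-orbit (the vectors of multiset type $I_{\{n,k\}}$), the positivity of $x_E(2)$ forces $\Pi\subseteq C^*$. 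The $S_{kn}$-action restricted to $C^*$ permutes the cyclic classes in a way that respects the shift $B(G)$, yielding a homomorphism $\rho\colon S_{kn}\to\mathbb Z/p$ whose image $J$ acts transitively on the cyclic classes that meet $\Pi$; hence $|\Pi\cap C^*_i|$ is a single value across the coset $\{i:\Pi\cap C^*_i\neq\emptyset\}$. This collapses the per-residue limits to one value $c(G)=p\,|\Pi\cap C^*_{i_0}|/(|\Pi|\,|C^*|)>0$, completing the proof.
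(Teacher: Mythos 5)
Your overall strategy coincides with the paper's: view $X(m)=B(G)X(m-1)$ as a finite Markov chain with a doubly stochastic transition matrix, use double stochasticity to rule out transient states and restrict attention to the closed recurrent class $C^*$ containing $E$, and invoke the ergodic theorem; your Part~1 (the explicit two-column table $T_1*T_2=E$ plus Lemma~\ref{plexsvoi}) is exactly the paper's argument. The one structural difference is that the paper exploits Part~1 immediately: since $x_E(m)>0$ for all even $m$, the period of the state $E$ divides $2$, so the ergodic theorem is applied either to $B(G)$ itself or to $B(G)^2$ restricted to the cyclic class of $E$. You instead keep the period $p$ general and work with the cyclic decomposition $C^*=C^*_0\sqcup\dots\sqcup C^*_{p-1}$, which is fine up to the computation of the per-residue limits.

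Your final paragraph, however, is both unnecessary and incorrect as written. Every $\sigma\in S_{kn}$ fixes $E$ and hence fixes the cyclic class $C^*_0$ containing $E$, so the homomorphism $\rho\colon S_{kn}\to\mathbb Z/p$ you construct is trivial; its image cannot act transitively on more than one cyclic class, so the claim that $J$ acts transitively on the cyclic classes meeting $\Pi$ is precisely the assertion that there is only one such class, which the symmetry argument does not deliver. Fortunately that assertion holds for a much more elementary reason: $\Pi$ is the support of $X(1)=B(G)X(0)$, i.e.\ the set of states reachable from $E$ in one step, and every state reachable in one step from $C^*_0$ lies in $C^*_1$. Hence $\Pi\subseteq C^*_1$, only the residue $m\equiv 0\pmod p$ contributes (equivalently, $x_E(m)=(B(G)^m)_{E,E}$ vanishes unless $p\mid m$ by the very definition of the period of $E$), and there are no distinct per-residue limits to reconcile; the limit over $\{m: x_E(m)>0\}$ equals $p/|C^*|>0$. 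With the last paragraph replaced by this one-line observation, your proof is complete and essentially equivalent to the paper's.
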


To prove this statement we need to recall some concepts and theorems of the theory of Markov processes.  

A Markov chain with a transition matrix $B$ (or the matrix itself) is called \textit{reducible} if by 	simultaneous permutations of rows and columns the matrix $B$ can be conjugate into a block upper triangular matrix of the form 
$$\left( \begin{array} {cc}
 P & R  \\ 0 & S 
\end{array} \right),$$  
where $P$ and $S$ are square matrices. A Markov chain is \textit{irreducible} if it is not reducible.

Let a matrix $B$ be irreducible. We define the \textit{period of index $i$} to be the greatest common divisor of all natural numbers $m$ such that $(i,i)$-th entry of $B^m$ is greater than zero. It is known that for an irreducible matrix $B$ each $i$ has the same period that is called the \textit{period} of $B$. If the period of a matrix $B$ is equal to one then the matrix $B$ is said to be \textit{aperiodic}.  The period of a Markov chain coincides with the period of its transition matrix.

The following theorem is the key result of the theory of Markov chains.

\begin{teorema}[Ergodic theorem] \label{tergo}
A Markov chain
$$X(m) = B X(m-1); ~ x_i(0) \geq 0;~ \sum\limits_{i} x_i(0) = 1$$
 with a transition matrix $B$ is irreducible and aperiodic if and only if for each $i$ there exists $\lim\limits_{m \rightarrow \infty} x_{i}(m) = c_i >0$ that does not depend on the initial state $X(0)$.
\end{teorema}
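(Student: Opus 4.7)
The plan is to prove the ergodic theorem via the classical route: show that irreducibility together with aperiodicity force some power $B^N$ to have all entries strictly positive, deduce a Doeblin-type contraction in the $\ell^1$ norm, and conclude convergence to a unique positive stationary vector; the converse direction is handled by elementary structural arguments. Throughout let $s$ denote the dimension of $B$, so each $X(m)$ is an $s$-vector and, with the paper's convention, columns of $B$ sum to $1$.

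For the forward direction (irreducible and aperiodic $\Rightarrow$ convergence), the crucial lemma is that there exists $N$ with every entry of $B^N$ strictly positive. Fix a state $i$; aperiodicity gives $\gcd\{m \ge 1 : (B^m)_{i,i} > 0\} = 1$, and the Frobenius coin theorem then provides $M_i$ such that $(B^m)_{i,i} > 0$ for all $m \ge M_i$. Combining with irreducibility, which supplies, for each pair $(i,j)$, some $\ell_{i,j}$ with $(B^{\ell_{i,j}})_{i,j} > 0$, one takes $N$ large enough that $(B^N)_{i,j} > 0$ for all $i,j$ simultaneously. Setting $\delta = \min_{i,j}(B^N)_{i,j} > 0$, decompose $B^N = s\delta\,\Pi + (1 - s\delta)B'$, where $\Pi$ is the matrix with all entries $1/s$ and $B'$ is column-stochastic. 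Since $\Pi X = \Pi Y$ for any two probability vectors $X,Y$, one obtains $\|B^N(X - Y)\|_1 \le (1 - s\delta)\|X - Y\|_1$. Iterating shows that $\{B^{kN}X(0)\}_k$ is Cauchy, hence $X(m)$ converges to a limit $c$ satisfying $Bc = c$; since $c = B^N c$, each coordinate satisfies $c_i \ge \delta \sum_j c_j = \delta > 0$, and the contraction forces $c$ to be independent of $X(0)$.

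For the converse I proceed by contraposition. If $B$ is reducible, then either there are multiple closed communicating classes, in which case a limit distribution (if it exists) is supported on the particular closed class that $X(0)$ feeds into, and two initial distributions concentrated on different such classes yield different limits; or there is a single closed class together with some transient states, in which case $c_i = 0$ for transient $i$, contradicting the hypothesis $c_i > 0$. If $B$ is irreducible but of period $d > 1$, the standard cyclic decomposition $\{C_0,\dots,C_{d-1}\}$ of the state space has $B$ mapping mass from $C_r$ into $C_{r+1 \bmod d}$, so an initial vector concentrated on $C_0$ gives $X(m)$ supported on $C_{m \bmod d}$; for any fixed state $i$ the sequence $x_i(m)$ then vanishes for $d-1$ residues out of $d$ and cannot converge to a positive limit.

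The main obstacle is the combinatorial/number-theoretic step producing the uniform $N$ with $B^N > 0$; one also must verify that the period is well-defined (i.e., the same for every state in an irreducible chain), which uses the inequality $(B^{m+\ell})_{i,i} \ge (B^m)_{i,j}(B^\ell)_{j,i}$ to relate the return-time sets at different states and thereby show all these sets share the same gcd. Once these ingredients are in hand, the Doeblin contraction, the Cauchy argument, and the converse direction are essentially routine.
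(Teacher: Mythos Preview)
The paper does not actually supply a proof of this theorem: it is quoted as ``the key result of the theory of Markov chains'' and invoked as a black box in the proof of Proposition~\ref{markov}. So there is no ``paper's own proof'' against which to compare.

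That said, your sketch is a correct rendition of the standard Perron--Frobenius/Doeblin argument. The forward direction is fine: irreducibility plus aperiodicity give a power $B^N$ with all entries at least some $\delta>0$ (via the numerical-semigroup argument you outline), the decomposition $B^N = s\delta\,\Pi + (1-s\delta)B'$ is legitimate since column sums force $s\delta\le 1$, and the resulting $\ell^1$-contraction on zero-sum vectors yields a unique positive fixed probability vector. One small point you should make explicit: the Cauchy property along the subsequence $\{kN\}$ has to be upgraded to convergence along all $m$, which follows because $B$ itself is nonexpansive in $\ell^1$ on zero-sum vectors and $B^r c = c$.

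Your converse is also correct. In the paper's intended application $B$ is in fact doubly stochastic, and there (as the paper notes, citing \cite{minc}) reducibility forces a block-diagonal structure with no transient states, so only the ``multiple closed classes'' branch of your argument is actually needed; but your more general treatment covering transient states and the periodic cyclic decomposition is the right way to prove the theorem as stated.
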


Now we are ready to prove Proposition~\ref{markov}.

\begin{proof}
Let $G$ be a quasigroup of order $n$, $U,V \in I_n^{kn}$, and let $W$ be a permutation of the multiset $I_{\left\{n,k\right\}}$. Suppose $B(G) = b_{U,V}$ is a nonnegative matrix of order $n^{kn}$ such that $b_{U,V}$ is equal to $\frac{k!^n}{(kn)!}$ multiplied by the number of permutations $W$ for which $V * W = U$. Let us consider the Markov chain
$X(m) = B(G) X(m-1),$ $x_E(0) = 1$, and $x_U(0) = 0$   for all other vectors $U$.  

 We already know that the matrix $B(G)$ is doubly stochastic. 
It is not hard to prove that if a doubly stochastic matrix is reducible then by simultaneous permutations of rows and columns it can be conjugated into a block-diagonal matrix in which each block is a doubly stochastic matrix of smaller order (a proof can be found, for example, in~\cite[p.~34]{minc}).  If the matrix $B(G)$ is reducible then instead of the Markov chain with the matrix $B(G)$ we consider a process with the irreducible block containing entry $b_{E,E}$. So we may assume that $B(G)$ is an irreducible matrix. 

The proof of Lemma~\ref{plexsvoi} implies that the state $x_E(m)$ has period $2$, so matrix $B(G)$ has period at most 2.
Moreover, $B(G)$ is aperiodic if for some odd $m$ we have $x_E(m) >0.$ 

1. If the matrix $B(G)$ is aperiodic, then by the ergodic theorem, there exists a constant $c = c(G)$ such that $\lim\limits_{m \rightarrow \infty} x_E(m) = c.$

2. If the matrix $B(G)$ has period $2$, then we consider the process $X(2t) = B^2(G) X(2t-2),$ $x_E(0) = 1$, and $x_U(0) = 0$   for all other vectors $U$.  The matrix $B^2(G)$ is aperiodic and we have $x_E(m) > 0$ for all even $m$. So, by  the ergodic theorem, there exists a constant $c = c(G)$ such that $\lim\limits_{m \rightarrow \infty} x_E(m) = c$, where limit is taken over even $m$.
\end{proof}

Since we have Proposition~\ref{markov} to be proved, Theorem~\ref{quasilowmulti} and all corollaries  are also proved.

\section{Computational results and concluding remarks}

At the end of this paper, we provide computational results on the numbers of transversals in the Cayley tables of iterated groups and quasigroups of small order, consider a generalization of Theorem~\ref{quasilowmulti} for partial transversals and plexes, and raise several problems.

It is not hard to see that latin hypercubes of orders 2 and 3 are unique up to equivalence and that they are the Cayley tables of the iterated groups $\mathbb{Z}_2$ and $\mathbb{Z}_3$. For order 4 there are plenty of non-equivalent multidimensional latin hypercubes but there exist only two groups of order 4, namely $\mathbb{Z}_4$ and $\mathbb{Z}_2^2$.  The following table summarize results from~\cite{myobz} and~\cite{myquasi} on transversals in $d$-dimensional latin hypercubes that are the Cayley tables of $d$-iterated groups of orders $n \leq 4$.
$$
\begin{array}{|c|c|c|}
\hline
 G \setminus d & \mbox{even} & \mbox{odd} \\
\hline
\mathbb{Z}_2 & 0 & 2^{d-1} \\
\hline
\mathbb{Z}_3 & \frac{2}{3} \cdot 6^{d-1} -  3^{d-2} & \frac{2}{3} \cdot 6^{d-1} +  3^{d-2} \\
\hline
\mathbb{Z}_4 & 0 & \frac{3}{8} \cdot 24^{d-1} + 5 \cdot 8^{d-2} \\
\hline
\mathbb{Z}_2^2 & \frac{3}{8} \cdot 24^{d-1} - 8^{d-2} & \frac{3}{8} \cdot 24^{d-1} + 5 \cdot 8^{d-2} \\
\hline
\end{array}
$$

Let us apply technique proposed in the proof of Theorem~\ref{quasilowmulti} to count the number of transversals in the Cayley tables of iterated group $\mathbb{Z}_5$ and of another iterated quasigroup of order 5.

\subsection{Transversals in the iterated group $\mathbb{Z}_5$}

We start with the group $\mathbb{Z}_5$ whose Caley table is as follows:
$$
\begin{array}{c|ccccc}
 ~ & 1 & 2 & 3 & 4 & 5\\
\hline
 1 & 1 & 2 & 3 & 4 & 5\\
 2 & 2 & 3 & 4 & 5 & 1\\
 3 & 3 & 4 & 5 & 1 & 2\\
 4 & 4 & 5 & 1 & 2 & 3\\
 5 & 5 & 1 & 2 & 3 & 4\\
\end{array} 
$$

As before, let $l_U(d)$ be the number of tables $T$ from the set $\mathcal{T}_U(d)$ such that each their column $T_j$ is a permutation of the set of elements of $\mathbb{Z}_5$ and 
$$(\ldots((T_1 * T_2)*T_3)* \ldots )*T_d = U,$$
where $*$ is the $\mathbb{Z}_5$ group operation.

Note that if there exists a permutation $\sigma \in S_5$ such that $(u_1, \ldots, u_5) = (v_{\sigma(1)}, \ldots, v_{\sigma(5)})$ then the numbers $l_U(d)$ and $l_V(d)$ are the same. Also, if vectors $U$ and $V$ satisfy $U * H = V$ where $H = (h, \ldots, h)$, $h \in \mathbb{Z}_5$ then $l_U(d) = l_V(d)$. This fact is easy to prove by induction on $d$ using associativity of the group $\mathbb{Z}_5$ and the fact that for any permutation $W$ of the group $\mathbb{Z}_5$ and any $H = (h, \ldots, h) $ the vector $H * W$ is a permutation. We will say that vectors $U$ and $V$ are equivalent if they can be turn to each other by these two operations. 

Denote by $Z_U$ the equivalence class for vector $U$. By direct calculation, it can be checked that $l_U(d)$ is non zero for some $d$ only if vector $U$ belongs to one of the following equivalence classes:
$$Z_{11111}, ~ Z_{12345},~Z_{11134},~Z_{11125},~Z_{11224},~Z_{11332}.$$ 

Also, it can be verified that numbers $l_U(d)$ for $U$ from $Z_{11134}$ and $Z_{11125}$ satisfy the similar relations, so they are coincide and these two classes can be joined into one. The same is true for classes $Z_{11224}$ and $Z_{11332}$. For shortness, we introduce the following notations
$$y_1(d) = \sum\limits_{U \in Z_{11111}} l_U(d); ~~~ y_2(d) = \sum\limits_{U \in Z_{12345}} l_U(d);$$
$$y_3(d) = \sum\limits_{U \in Z_{11134}, ~ Z_{11125}} l_U(d); ~~~ y_4(d) = \sum\limits_{U \in Z_{11224}, ~ Z_{11332}} l_U(d).$$

Then the number of transversals in the $d$-dimensional latin hypercube that is the Cayley table of $d$-iterated group $\mathbb{Z}_5$ is equal to $\frac{1}{5 \cdot 120}y_1(d+1)$ or to $\frac{1}{120} y_2(d)$.  
By direct calculations, we establish that $y_i(d)$ satisfy the following recurrence relations:
$$
\left( 
\begin{array}{c}
 y_1(d) \\
y_2(d) \\
y_3(d) \\
y_4(d) \\
\end{array} 
\right)
=
\left(
\begin{array}{cccc}
0 & 5 & 0 & 0 \\
120 & 15 & 30 & 20 \\
0 & 50 & 30 & 40 \\
0 & 50 & 60 & 60 \\
\end{array} 
\right)
\left(
\begin{array}{c}
 y_1(d-1) \\
y_2(d-1) \\
y_3(d-1) \\
y_4(d-1) \\
\end{array} 
\right);
$$
$$ y_1(0) = 1;~y_2(0) = y_3(0) = y_4(0) = 0. $$

Note that these relations define a process with irreducible and aperiodic matrix $A$.  So we have that for all $d \geq 1$ the number of transversals in the $d$-iterated group $\mathbb{Z}_5$ is greater than zero and there exists a constant $c(\mathbb{Z}_5, 1)$ for which
$$\lim\limits_{d \rightarrow \infty} \frac{T  (Q(G^{\left[d\right]}))}{ 120^{d-1}} = c(\mathbb{Z}_5, 1).$$

To find the limit constant $c(\mathbb{Z}_5, 1)$ we use the fact that $y_1(d) =\chi_1 120^d$ where $\chi_1$ is the first component of the eigenvector $\chi$ of the matrix $A$ corresponding to the largest eigenvalue $\lambda = 120$ and normalized so that $\sum\limits_{i=1}^4 \chi_i = 1$.   Compute that
$$\chi = \frac{1}{125} (1, 24, 40, 60)^T.$$
Thus $\chi_1 = \frac{1}{125}$ and $c(\mathbb{Z}_5,1) = \frac{120\chi_1}{5} = \frac{24}{125}.$ Therefore, for the number of transversals in the $d$-dimensional $d$-iterated group $\mathbb{Z}_5$ and for the permanent of the corresponding $(d+1)$-dimensional MDS code we have
 $$\lim\limits_{d \rightarrow \infty} \frac{T  (Q(G^{\left[d\right]}))}{ 120^{d-1}} = \frac{24}{125}$$
and
 $$\lim\limits_{d \rightarrow \infty} \frac{\per  M(G^{\left[d\right]})}{ 120^{d-1}} = \frac{24}{125}.$$

\subsection{Transversals in the quasigroup of order 5}

Let us consider the quasigroup $G$ of order 5 defined by the following Cayley table:
$$
\begin{array}{c|ccccc}
 ~ & 1 & 2 & 3 & 4 & 5\\
\hline
 1 & 1 & 2 & 3 & 4 & 5\\
 2 & 2 & 1 & 4 & 5 & 3\\
 3 & 3 & 4 & 5 & 1 & 2\\
 4 & 4 & 5 & 2 & 3 & 1\\
 5 & 5 & 3 & 1 & 2 & 4\\
\end{array} 
$$

We will use the same notations as before but now $Z_U$ will be the set of vectors $V$ that can be turn to the vector $U$ only via a permutation of components. 
 By direct calculation, it can be checked that for all vectors $U$ the number $l_U(d)$ is not zero for certain $d$. 

Analyzing  the relations between  numbers $l_U(d)$, we divide all vectors from the set $I_5^5$ into the following classes (different letters mean different values):
$$H_1 = \cup Z_{iiiii}; ~~~ H_2 = Z_{12345}; ~~~ H_3 = \bigcup Z_{222ij} \cup Z_{iii2j};$$
$$ H_4 = \cup Z_{iiijk}, \mbox{ where } i,j,k \neq 2; ~~~ H_5 = \bigcup Z_{222ii}  \cup Z_{iii22}; ~~~ H_6 = \cup Z_{iiijj}, \mbox{ where } i,j \neq 2;$$
$$H_7 = \cup Z_{ii2jk}; ~~~ H_8 = \cup Z_{iijkl}, \mbox{ where } i,j,k,l \neq 2; ~~~ H_9 = \cup Z_{22ijk};$$
$$H_{10} = \cup Z_{22iik}; ~~~ H_{11} = \cup Z_{iijjk}, \mbox{ where } i,j,k \neq 2;; ~~~ H_{12} = \cup Z_{iijj2}; ~~~ H_{13} = \cup Z_{iiiij}.$$

Put $y_i(d) =  \sum\limits_{U \in H_{i}} l_U(d)$ for all $i = 1, \ldots, 13$ and denote $Y(d) = (y_1(d), \ldots, y_{13}(d))^T.$ By calculations, we found that $Y(d)$ satisfies the process $Y(d) = A \cdot Y(d-1)$ with the matrix
$$ A = \left(
\begin{array}{ccccccccccccc}
0 & 5 & 0 & 0 & 0 & 0 & 0 & 0 & 0 & 0 & 0 & 0 & 0 \\
120 & 3 & 6 & 12 & 12 & 0 & 2 & 2 & 8 & 0 & 4 & 12 & 0 \\
0 & 18 & 18 & 18 & 0 & 0 & 12 & 18 & 18 & 12 & 12 & 24 & 0 \\
0 & 24 & 12 & 12 & 0 & 0 & 8 & 6 & 6 & 12 & 12 & 8 & 0 \\
0 & 8 & 0 & 0 & 0 & 0 & 4 & 0 & 0 & 8 & 8 & 0 & 0 \\
0 & 0 & 0 & 0 & 0 & 0 & 6 & 6 & 6 & 8 & 8 & 8 & 0 \\
0 & 12 & 24 & 24 & 36 & 36 & 30 & 24 & 24 & 28 & 28 & 16 & 72 \\
0 & 4 & 12 & 6 & 0 & 12 & 8 & 12 & 6 & 12 & 8 & 8 & 24 \\
0 & 16 & 12 & 6 & 0 & 12 & 8 & 6 & 0 & 16 & 12 & 0 & 24 \\
0 & 0 & 12 & 18 & 36 & 24 & 14 & 18 & 24 & 4 & 8 & 24 & 0 \\
0 & 12 & 12 & 18 & 36 & 24 & 14 & 12 & 18 & 8 & 12 & 16 & 0 \\
0 & 18 & 12 & 6 & 0 & 12 & 4 & 6 & 0 & 12 & 8 & 4 & 0 \\
0 & 0 & 0 & 0 & 0 & 0 & 10 & 10 & 10 & 0 & 0 & 0 & 0 \\
\end{array} 
\right).$$

The eigenvector $\chi$ of the matrix $A$ corresponding to the largest eigenvalue $\lambda$ with $\sum\limits_{i=1}^{13} \chi_i = 1$ is 
$$\chi = \frac{1}{625} (1, 24, 72, 48, 16, 24, 144, 48, 48, 72, 72, 36, 20)^T.$$

So the limit constant $c(G,1)$ for the number of transversals in the $d$-dimensional $d$-iterated quasigroup $G$ and for the permanent of the corresponding $(d+1)$-dimensional MDS code is $c(G,1) = \frac{24}{625}.$

\subsection{Remarks and open questions}

First of all we would like to note that the technique proposed in this paper can be used not only to count transversals in considered iterated quasigroups of order 5 but  to estimate the number of transversals and $k$-plexes in other small-ordered iterated quasigroups and to find these numbers for small dimensions.

Secondly, we note that $d$-iterated quasigroups obtained from isotopic binary quasigroups can be non-isotopic and can have different number of transversals and plexes. $d$-ary quasigroups $f$ and $g$ of order $n$ are called \textit{isotopic}, if there exist permutations $\sigma_i \in S_n$, $i = 0, \ldots, d$ such that 
$$f(x_1, \ldots, x_d) \equiv \sigma^{-1}_0 \left( g(\sigma_1(x_1), \ldots, \sigma_d(x_d)) \right).$$ 
For example, there exists a binary quasigroup $G$ of order 4 isotopic to the group $\mathbb{Z}_4$ but in contrast to the $d$-iterated group $\mathbb{Z}_4$ the $d$-iterated quasigroup $G$ has transversals for all  $d \geq 3$.

At last, acting like in the proof of Theorem~\ref{quasilowmulti}, we can obtain the similar theorem for partial structures in iterated quasigroups.
A \textit{partial diagonal of length $t$} in a $d$-dimensional matrix of order $n$ is a set of $t$ indices such that each hyperplane contains no more than one of chosen indices.    A \textit{partial transversal of length $l$} in a $d$-dimensional latin hypercube $Q$ of order $n$ is a set of indices that corresponds to a unity partial diagonal in the $(d+1)$-dimensional matrix $M(Q)$.

A \textit{partial $k$-multiplex of length $l$} in a multidimensional matrix is a multiset of $kl$ indices such that there are either $k$ or zero indices in each hyperplane. Analogically, we can define partial $k$-multiplexes in latin hypercubes.

For a $d$-dimensional nonnegative matrix $A$ of order $n$ let us denote by $P_{l,k}(A)$ the number of partial $k$-multiplexes of length $l$ over the support of $M$. Using the same method as in the proof of Theorem~\ref{quasilowmulti}, we can prove the following statement.

\begin{utv}
Let $G$ be a binary quasigroup of order $n$ and let $M(G^{\left[d-1\right]})$ be the $d$-dimensional MDS code of the  $(d-1)$-iterated quasigroup $G$.
\begin{enumerate}
\item For all even $d$ and for all $l \leq n$ the MDS code $M(G^{\left[d-1\right]})$ has a partial $k$-multiplex of length $l$. If for some odd $d'$ we have that $P_{l,k} (M(G^{\left[d'-1\right]}))$ is positive then $P_{l,k} (M(G^{\left[d-1\right]})) $ is greater than zero for all $d \geq d'$.
\item  There exists a constant $c(G, k, l) >0$ such that  
$$\lim\limits_{d \rightarrow \infty} \frac{P_{l,k}  (M(G^{\left[d-1\right]}))}{\left( {n \choose l} \frac{(kl)!}{k!^l} \right)^{d-2}} = c (G, k, l) ,$$
where limit is taken over all $d$ for which $M(G^{\left[d-1\right]})$ contains a partial $k$-multiplex of length $l$.
\end{enumerate}
\end{utv}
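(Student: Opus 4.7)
The plan is to transcribe the proof of Theorem~\ref{quasilowmulti} almost line by line, replacing the set of all permutations of the multiset $I_{\left\{n,k\right\}}$ (the allowed columns) by the set $\mathcal{V}$ of all $kl$-vectors over $I_n^1$ whose entries consist of exactly $l$ distinct symbols each appearing exactly $k$ times. A direct count gives $|\mathcal{V}| = \binom{n}{l}\frac{(kl)!}{k!^l}$, which is precisely the normalization appearing in the statement.

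First, a partial $k$-multiplex of length $l$ in $M(G^{\left[d-1\right]})$ is, up to permutations of its $kl$ rows, the same data as a $kl \times d$ table $T$ all of whose columns $T_j$ lie in $\mathcal{V}$, so that exactly $l$ hyperplanes in each coordinate direction are covered (each by $k$ indices), and which satisfies $(\ldots((T_1 * T_2) * T_3) * \ldots) * T_d = E$, ensuring every row is a support index of the MDS code. A direct analogue of clause 2 of Lemma~\ref{uppers} confirms that partial $k$-multiplexes of length $l$ with coinciding rows contribute negligibly to $P_{l,k}(M(G^{\left[d-1\right]}))$, so it suffices to count such tables asymptotically.

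Next, let $l_U(m)$ denote the number of $kl \times m$ tables with each column in $\mathcal{V}$ and cumulative product $U \in I_n^{kl}$. The recurrence $l_U(m) = \sum_V a_{U,V}\, l_V(m-1)$ holds with $a_{U,V}$ equal to the number of $W \in \mathcal{V}$ with $V * W = U$. Both row- and column-sums of $A = (a_{U,V})$ equal $|\mathcal{V}|$: for each fixed $V$ the map $W \mapsto V * W$ is injective on $\mathcal{V}$, and for each fixed $W$ the map $V \mapsto V * W$ is injective, since $*$ is a quasigroup operation. Hence $B(G,k,l) = |\mathcal{V}|^{-1}A$ is doubly stochastic, and the normalized vector $X(m)$ with $x_U(m) = |\mathcal{V}|^{-m} l_U(m)$ evolves by $X(m) = B(G,k,l)\, X(m-1)$.

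Both clauses then follow by the period-at-most-two argument used in Proposition~\ref{markov}. For the period bound, given any $V \in \mathcal{V}$, the unique $W$ with $V * W = E$ lies in $\mathcal{V}$, because the componentwise map $v \mapsto w$ defined by $v * w = 1$ is a bijection on $I_n^1$ and hence preserves the multiset type of $V$; this yields $l_E(2) > 0$. Restricting to the irreducible block of $B(G,k,l)$ containing $E$ and applying the ergodic theorem (to $B(G,k,l)$ itself or to its square, depending on whether the period of $E$ is $1$ or $2$) produces the constant $c(G,k,l) > 0$. The only step that requires genuine verification is the double stochasticity of $A$; everything else is a mechanical adaptation of the proof of Theorem~\ref{quasilowmulti}.
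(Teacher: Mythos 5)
Your proposal is correct and is exactly the adaptation the paper intends: the paper itself gives no separate argument for this proposition beyond the remark that the method of Theorem~\ref{quasilowmulti} applies, and your substitution of the column alphabet by the set $\mathcal{V}$ of $kl$-vectors with $l$ distinct symbols each repeated $k$ times (of cardinality ${n \choose l}\frac{(kl)!}{k!^l}$), together with the verification of double stochasticity and the period-two closure of $\mathcal{V}$ under symbol bijections, is precisely that method carried out. No gaps.
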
      

In conclusion, we raise several questions and open problems inspired by the obtained results:

1. How can the limit constants $c(G,k)$ for the number of $k$-plexes be estimated in general and for given groups and quasigroups $G$? 

2. How are the limit constants $c(G,k)$ for the number of $k$-plexes in a given iterated quasigroup $G$  related to each other for different $k$?   

3. Does the iterated groups $\mathbb{Z}_n$ always have the maximal number of transversals (and possibly $k$-plexes and $k$-multiplexes) among  all iterated quasigroups of order $n$?

4. Let $k \leq n^{d-1}$ and a latin hypercube $Q$ has a $k$-multiplex. Is it true that $Q$ has a $k$-plex? The similar question can be asked for a nonnegative matrix.

\end{document}